\journal{}%
\begin{document}
\begin{frontmatter}
\newtheorem{theorem}{Theorem}[section]
\newtheorem{lemma}{Lemma}[section]
\newtheorem{corollary}{Corollary}[section]
\newdefinition{definition}{Definition}
\newdefinition{remark}{Remark}
\newproof{proof}{Proof}
\newdefinition{example}{Example}
\title{Region unknotting number of $2$-bridge knots}
\author{Vikash. S}
\author{Madeti. P}
\address{Department of Mathematics, IIT Ropar, Rupnagar - 140001, India.}

\begin{abstract}
In this paper, we discuss the region unknotting number of different classes of 2-bridge knots. In particular, we provide region unknotting number for the classes of $2$-bridge knots whose Conway notation is $C(m,\ n), C(m,\ 2,\ m),$ $ C(m,\ 2,\ m\pm1)$ and $C(2,\ m,\ 2,\ n)$. By generalizing, we also provide a sharp upper bound for all the remaining classes of $2$-bridge knots.
\end{abstract}

%

\end{frontmatter}

\section{Introduction}
\noindent In \cite{ayaka}, A. Shimizu introduced a new local transformation on link diagrams and named it as \textit{region crossing change}. In \cite{ayaka, proper}, it was proved that this new local transformation is an unknotting operation for a knot or a proper link. Here a region crossing change at a region $R$ of a knot diagram $D$ is defined to be the crossing changes at all the crossing points on $\partial R$. The region unknotting number $u_R(D)$ of a knot diagram $D$ is the minimum number of region crossing changes required to transform $D$ into a diagram of the trivial knot without Reidemeister moves. The region unknotting number $u_R(K)$ of $K$ is defined to be the minimal $u_R(D)$ taken over all minimal crossing diagrams $D$ of $K$. In \cite{proper}, Z. Cheng proved that region crossing change for a link is an unknotting operation if and only if the link is proper.\\

\noindent Many knot theorists studied different unknotting operations like $\sharp$-operation  \cite{hash},
 $\delta$-operation~\cite{delta}, $3$-gon operation~\cite{3-gon}, H(n)-operation~\cite{H(n)}
 and $n$-gon~\cite{n-gon} operations. In \cite{3-gon} Y. Nakanishi proved that a $\delta$-unknotting operation can be obtained from a finite sequence of $3$-gon moves. In \cite{n-gon}, H. Aida generalized $3$-gon moves to $n$-gon moves and proved that an $n$-gon move is an unknotting operation.\\

\noindent  It is interesting to observe that both $\sharp$-operation and $n$-gon moves are special cases of region crossing change. Finding region unknotting number for different knots is a challenging problem. In \cite{ayaka}, A. Shimizu showed that for a twist knot $K$, $u_R(K)=1$ and for torus knots of type $K(2,4m\pm1)$, $u_R(K(2,4m\pm1))=m$, where $m\in \mathbb{Z^+}$. In \cite{regionunko}, we provided a sharp upper bound for region unknotting number of torus knots.\\

\noindent In this paper, we provide region unknotting number for all those $2$-bridge knots whose Conway's notation is $C(m,\ n),\ C(m,\ 2,\ n)$, $C(m,\ 2,\ m\pm1)$ and $C(2,\ m,\ 2,\ n)$. We also discuss some bounds on region unknotting number for other $2$-bridge knot classes. Since minimal crossing diagrams are required to find region unknotting number of knots, we mainly look for all the $2$-bridge knot diagrams with minimum crossings. In this context,  it is required to observe that all $2$-bridge knots are prime \cite{Schubert} and alternating \cite{goodrick1972}. Specifically, using Tait's third conjecture, which is true \cite{tait3}, one can obtain all the minimal diagrams of a prime reduced alternating knot $K$ from a minimal crossing diagram of $K$ by performing finite number of flypings. In \cite{ayaka}, A. Shimizu provided a method to find all possible minimal crossing diagrams of a prime alternating knot. Here our concentration is only on the $2$-bridge knots.\\

\noindent Based on the method provided in \cite{ayaka}, we can show a $2$-bridge knot $C(m,\ n)$, where ($m,\ n\neq 0 \in \mathbb{Z}, mn>0$), has only one minimal crossing diagram on $S^2$. Let $D$, as in Figure \ref{mn}(a), be a minimal crossing diagram of a $2$-bridge knot $C(m,\ n)$. For every crossing point $c$ in integer tangle $t_m$ (horizontal tangle having $m$ half twists), $T_c^+$ and $T_c^-$ are shown in Figure \ref{mn}(b) and \ref{mn}(c), respectively. To get non-trivial flyping and hence non equivalent minimal diagrams of $C(m,\ n)$, $T_c^+$ and $T_c^-$ should not satisfy any of the following three conditions:
\begin{enumerate}
\item the tangle $T_c^\epsilon$ is not a tangle sum ($\epsilon = +,-$)
\item the tangle $T_1$ or $T_2$ is an integer $2$-tangle
\item the tangles $T_1$ and $T_2$ satisfy $T_{1hv} = T_1$ and $T_{2v} = T_2$, or $T_{1v} = T_1$ and $T_{2hv} = T_2$.
\end{enumerate}
Observe that the tangle $T_c^+$ is sum of two tangles $T_1$ and $T_2$, where either $T_1$ is $t_i$ and $T_2$ is tangle sum of $t_n'$ (vertical tangle having $n$ half twists) and $t_{m-(i+1)}$ or $T_1$ is tangle sum of $t_i$ and $t_n'$ and $T_2$ is $t_{m-(i+1)}$ (where $0\leq i \leq m-1$). The tangle $T_c^-$ is not a tangle sum. Then $T_c^+$ and $T_c^-$ satisfy  the cases (2) and (1) respectively. Therefore we can not perform non-trivial flyping on any $c$ and $T_c^\epsilon$, where $c$ is a crossing in $m$-tangle. Since $C(m,\ n) \sim C(n,\ m)$, same happens for any crossing $c$ from vertical tangle $t_n'$. Hence, $2$-bridge knot $C(m,\ n)$ has only one minimal diagram.\\

\begin{figure}
\begin{center}
\begin{subfigure}[b]{0.2\textwidth}
                \includegraphics[width=\textwidth]{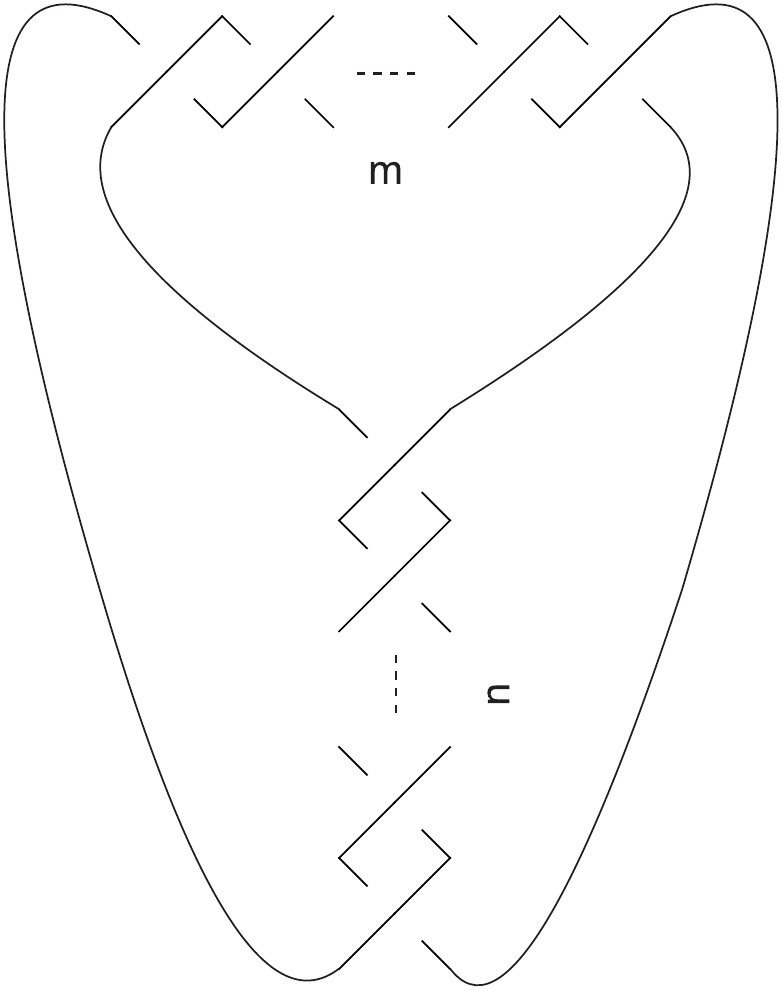}
                \caption{}
                \label{cmn}
        \end{subfigure}%
        ~ 
        \begin{subfigure}[b]{0.35\textwidth}
                \includegraphics[width=\textwidth]{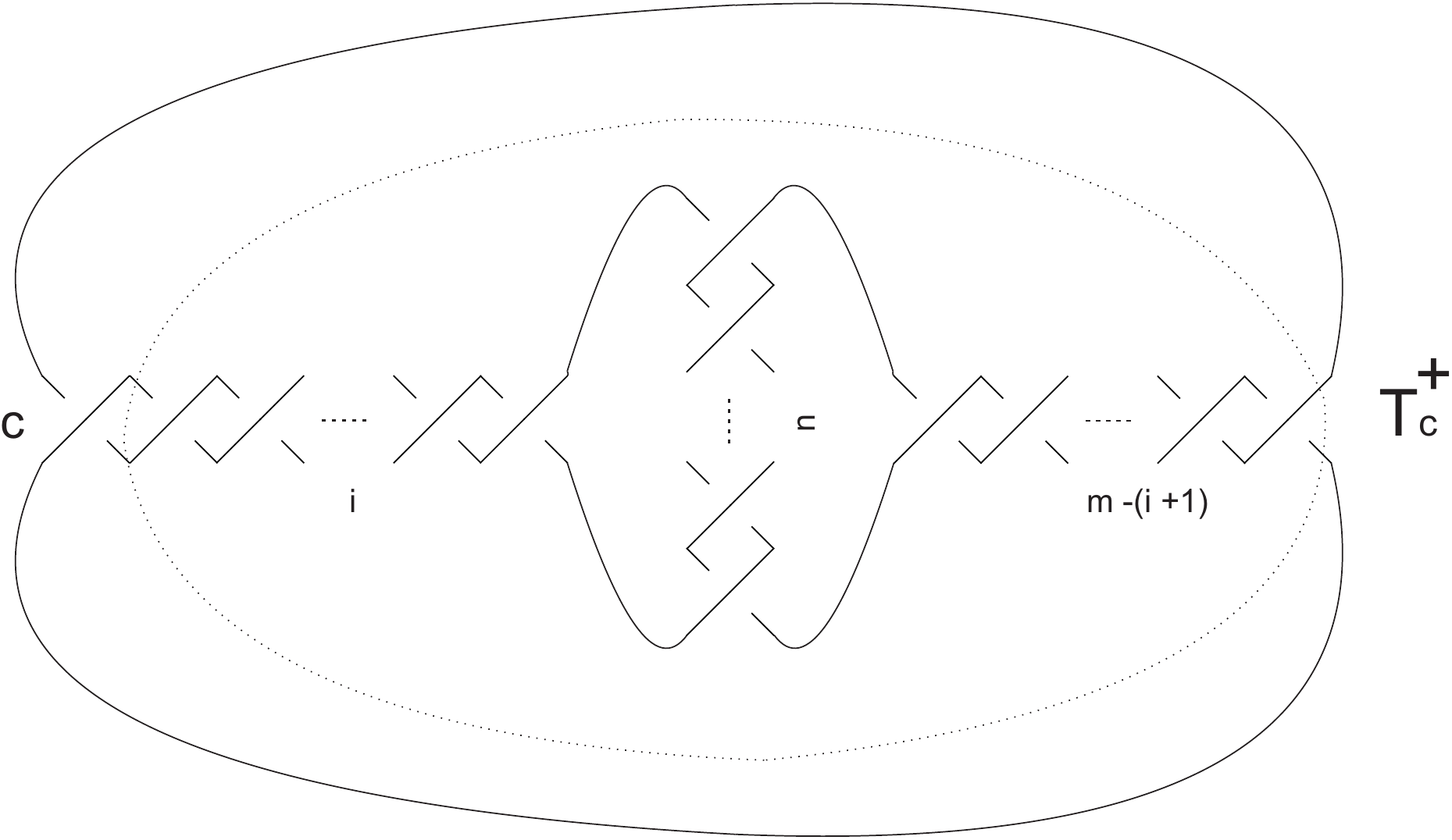}
                \caption{}
                \label{mna}
        \end{subfigure}
        ~ 
        \begin{subfigure}[b]{0.25\textwidth}
                \includegraphics[width=\textwidth]{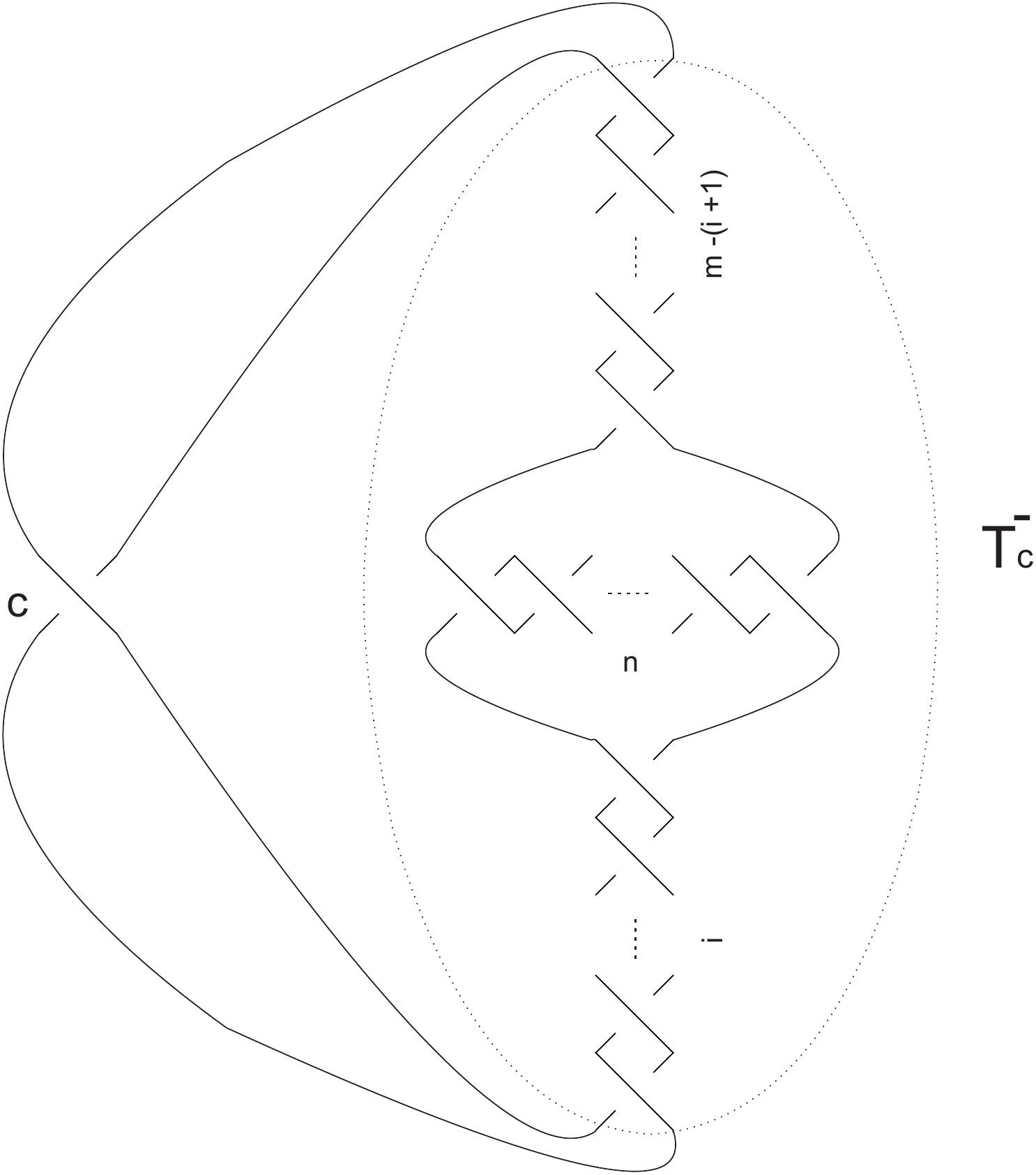}
                \caption{}
                \label{mnb}
        \end{subfigure}
\vspace*{8pt}
\caption{Minimal diagram for 2-bridge knot $C(m,\ n)$}\label{mn}
\end{center}
\end{figure}

\noindent In Section \ref{ubfrunf2bk}, we provide region unknotting number for $2$-bridge knot classes whose Conway's notation is $C(m,\ n),$ $C(m,\ 2,\ m)$ and $C(m,\ 2,\ m\pm1)$. Also we provide an upper bound for region unknotting number for all $2$-bridge knots. In Section \ref{arfinvariant}, we provide Arf invariant for $2$-bridge knots (not links) whose Conway's notation is $C(m,\ n)$ and $C(m,\ p,\ n)$. 

\section{Region unknotting number for $2$-bridge knots}\label{ubfrunf2bk}

\noindent In this section we provide region unknotting number of $2$-bridge knot $K$ whose Conway notation is $C(m,\ n)$ by showing $u_R(K) = u_R(C(m,\ n))$. Region unknotting number of $2$-bridge knots whose Conway notation is  $C(m,\ 2,\ m)$ and $C(m,\ 2,\ m\pm1)$ is shown to be one. Also we give upper bound for $2$-bridge knot classes whose Conway notation is $ C(m,\ 2,\ n)$, $C(m,\ p,\ n)$, $C(c_1,\ c_2, \cdots, c_n)$ where $c_{2k+1} \text{ is } even$ and $C(c_1,\ c_2, \cdots, c_n)$ where $c_{2k} \text{ is } even$ and $n \text{ is } even$. At last, a general upper bound for region unknotting number for  all $2$-bridge knots is also provided.\\

\noindent The key idea to ensure the region unknotting number is that in a $2$-bridge knot $C(c_1\ c_2 \cdots c_n)$ each tangle $c_i$ is a $(2,q)$ type toric braid and by \cite{regionunko}, region unknotting number of $(2,q)$ type torus knot or proper link is $ \lfloor\frac{q+2}{4}\rfloor$. Hence, to convert an integer $2$-tangle $t_n$ or $t_n'$ into $0$ or $\infty$ tangle, respectively, we need to make atleast $ \lfloor\frac{n+2}{4}\rfloor$ region crossing changes. Throughout this paper, we consider only those $2$-bridge knots which are either knots or proper links. Observe that in $C(m,\ n)$, if both $m$ and $n$ are odd and $m+n\not\equiv 0\ (mod\  4)$, then $2$-bridge knot $C(m,\ n)$ is not proper.

\begin{theorem}\label{5.thm23}
Let K be a $2$-bridge knot/proper link whose Conway's notation is $C(m,n)$. Then we have the following:
\begin{enumerate}
\item if $m,n$ are even, then $u_R(K)=\lfloor\frac{{min\{m,n\}+2}}{4}\rfloor$,
\item if $m$ even, $n$  odd, then $u_R(K)=\lfloor\frac{m+2}{4}\rfloor$,
\item if $m$ odd, n even, then  $u_R(K)=\lfloor\frac{n+2}{4}\rfloor$,
\item if $m,n$ are odd, then $u_R(K)=\frac{m+n}{4}$.
\end{enumerate}
\end{theorem}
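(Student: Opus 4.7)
My plan is to establish matching upper and lower bounds in each of the four parity cases, reducing systematically to the formula $u_R = \lfloor(q+2)/4\rfloor$ for the $(2,q)$ torus knot or proper link in its standard $q$-crossing diagram, which is available from \cite{regionunko}. The paragraph preceding the theorem already supplies the key lemma that one needs at least $\lfloor(m+2)/4\rfloor$ region changes to convert the horizontal tangle $t_m$ to the $0$-tangle and at least $\lfloor(n+2)/4\rfloor$ to convert $t_n'$ to the $\infty$-tangle.

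For the upper bound I would work in the unique minimal diagram of $C(m,n)$ from Figure~\ref{mn}(a). The horizontal tangle $t_m$ and the vertical tangle $t_n'$ each carry their own ``internal'' regions, namely those regions whose boundary crossings lie entirely in one sub-tangle. In cases (1)--(3) at least one of $m,n$ is even, and I would select the sub-tangle with smaller size and perform within it the $\lfloor(q+2)/4\rfloor$ region changes that trivialize its standard $(2,q)$ diagram, as identified in \cite{regionunko}. Once $t_m$ becomes a $0$-tangle (or $t_n'$ an $\infty$-tangle), the closure of the remaining tangle is the unknot, giving the desired upper bound. For case (4), where both $m$ and $n$ are odd, neither sub-tangle alone can be trivialized by region changes restricted to its interior, so I would combine $\lfloor(m+2)/4\rfloor$ internal changes in $t_m$ with $\lfloor(n+2)/4\rfloor$ internal changes in $t_n'$; using $m+n\equiv 0\pmod{4}$, which is forced by the properness hypothesis, the total collapses to $(m+n)/4$.

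For the lower bound I would partition the regions of the diagram into three classes: regions interior to $t_m$, regions interior to $t_n'$, and the junction regions (the two unbounded outer regions and the two corner regions at the tangle interface) whose boundary crossings span both sub-tangles. Any region-change sequence on $C(m,n)$ induces, by restriction to each sub-tangle, a region-change sequence on the standard $(2,m)$ and $(2,n)$ diagrams respectively. For the final diagram to represent the unknot, each restricted sequence must trivialize the corresponding sub-tangle up to a controlled residue determined by the parities of $m$ and $n$. Combined with the sharp bound $\lfloor(q+2)/4\rfloor$ from \cite{regionunko}, this yields the claimed minimum in each parity case.

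The main obstacle will be case (4). Here the parities of the internal changes inside $t_m$ and $t_n'$ are tied together by the contribution of the four junction regions, and I will need to show that no sequence of region changes can trivialize one sub-tangle while leaving the other essentially undisturbed. This forces the two sub-tangle counts to be \emph{summed} rather than minimized, producing the additive formula $(m+n)/4$. The careful bookkeeping is a parity argument on how each junction region change flips the signs of crossings in both tangles simultaneously, together with the observation that a $(2,q)$ standard diagram with $q$ odd cannot be converted to a $0$-tangle by an odd number of crossing sign changes.
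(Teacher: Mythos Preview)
Your overall strategy matches the paper's: work in the unique minimal diagram of $C(m,n)$, reduce to the torus-knot formula $u_R(K(2,q))=\lfloor (q+2)/4\rfloor$, and argue that unknotting forces the signed crossing sum in at least one of the two integer tangles to vanish. The paper's proof is organized the same way, giving explicit region choices for the upper bound and invoking the impossibility of collapsing $t_q$ with fewer than $\lfloor (q+2)/4\rfloor$ changes for the lower bound; your ``restriction to sub-tangles'' framework is a more abstract packaging of that same argument.

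There is, however, a slip in your upper-bound plan for cases (2) and (3). You write that in cases (1)--(3) you would ``select the sub-tangle with smaller size'' and trivialize it. That is correct only in case (1). In case (2) ($m$ even, $n$ odd) the answer is $\lfloor (m+2)/4\rfloor$ regardless of which of $m,n$ is smaller: if $n<m$ you cannot trivialize $t_n'$ to the $\infty$-tangle by region changes because $n$ is odd, so you are forced to work in $t_m$. The paper handles this by explicitly comparing the two routes (first attack $t_n'$, leaving a $(2,m\pm 1)$ torus knot, versus attack $t_m$ directly) and showing the latter is never worse. Your rule should be ``select the \emph{even} sub-tangle,'' not the smaller one.

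For case (4), your parallel reduction of both tangles to residues $\pm 1$ does give $(m+n)/4$ changes and the resulting diagram represents $C(\pm 1,\mp 1)$, which is the unknot; the paper instead reduces $t_n'$ to $\pm 1$ first and then treats the result as a $(2,m\pm 1)$ torus knot, but the counts agree. Your outline of the lower bound there (junction regions couple the two tangles, and odd $q$ blocks reduction to the $0$-tangle) is exactly the content of the paper's ``neither $m$ nor $n$ separately can reduce to $0$,'' though neither you nor the paper makes this step fully rigorous.
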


\begin{proof}
Since $K$ is a $2$-bridge knot with Conway notation $C(m,\ n)$, the only minimal diagram for $K$ is as shown in Figure \ref{regiondata2-bridge}(a). From Figure \ref{regiondata2-bridge}(a), it is clear that this minimal diagram of $K$ has total $m+n+2$ regions, out of which the regions $R_1, R_{m+1}; R_1',R_{n+1}';$ and the remaining $m+n-2$ regions have $n+1; m+1;$ and $2$ crossings respectively on their boundaries.\\

\noindent To get trivial knot diagram from $C(m,\ n)$, we need to make region crossing changes such that sum of signs of crossings in either horizontal or vertical tangle become $0$. In other words, to transform $C(m,\ n)$ to unknot by region crossing changes, we need to reduce either $m$ or $n$ to $0$. In this process of selection of regions, observe that a region crossing change at any one of $R_1, R_{m+1} \text{ or } R_1',R_{n+1}'$ in $C(m,\ n)$ will reduce $m$ to $m-2$ or $n$ to $n-2$ respectively and hence at each step, the absolute value of the sum of signs of crossings of either horizontal or vertical tangle reduce by $2$. But the region crossing change at any other region will reduce sum of signs of crossings of either horizontal or vertical tangle by $4$.\\

\noindent Since  the choice of regions is based on the values of $m$ and $n$, here we provide region unknotting number of $C(m,\ n)$ for all possible cases of $m$ and $n$.\\

\noindent \textbf{\underline{Case (i)}} If both $m$ and $n$ are even:\\
Without loss of generality assume $n\leq m$. If $n\equiv 0\ (mod\  4)$, then make region crossing changes at any non-consecutive $\frac{n}{4}$ regions among $R'_j(2\leq j\leq n )$. These region crossing changes reduce the absolute value of sum of signs of crossings of vertical tangle to zero i.e., the diagram $C(m,\ n)$ transforms to a diagram of $C(m,\ 0)$, which is $m$ times twisted unknot. Hence $u_R(K)\leq \frac{n}{4}$. Since it is not possible to reduce a $t_n'$ tangle to $\infty$ tangle with less than $ \frac{n}{4} $ region crossing changes,  $u_R(K) = \frac{n}{4}=\lfloor\frac{n+2}{4}\rfloor$.\\

\begin{figure}[h]
\begin{center}
\begin{subfigure}[b]{0.2\textwidth}
                \includegraphics[width=\textwidth]{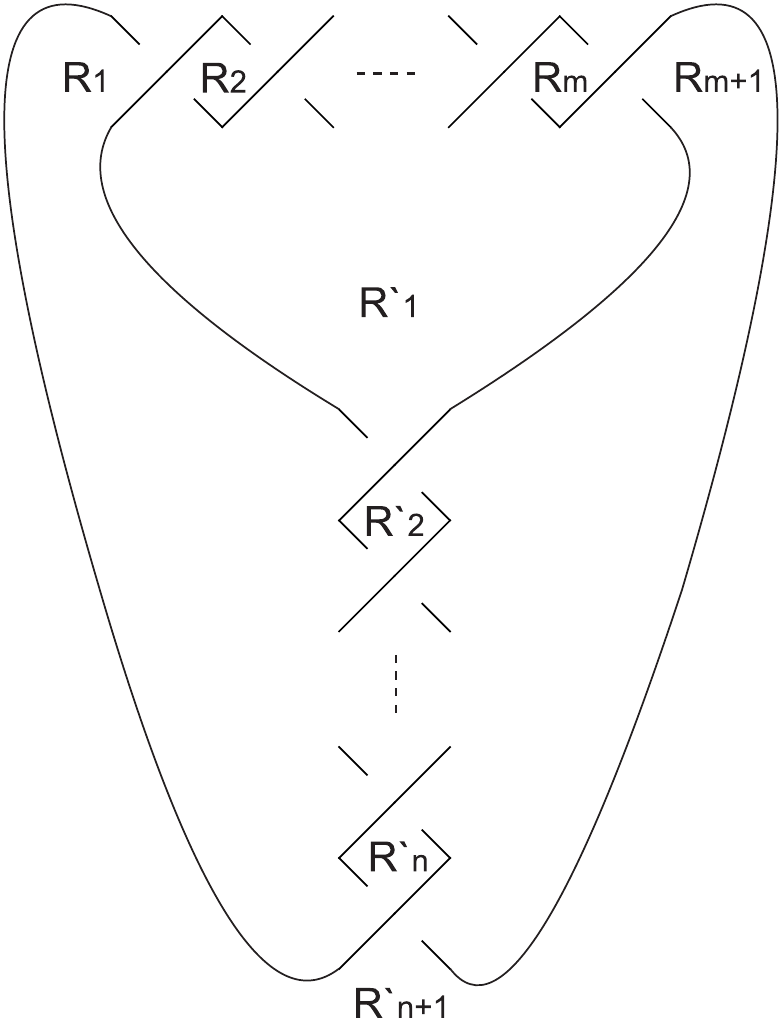}
                \caption{}
                \label{mndata}
        \end{subfigure}~~~~~
       \begin{subfigure}[b]{0.4\textwidth}
                \includegraphics[width=\textwidth]{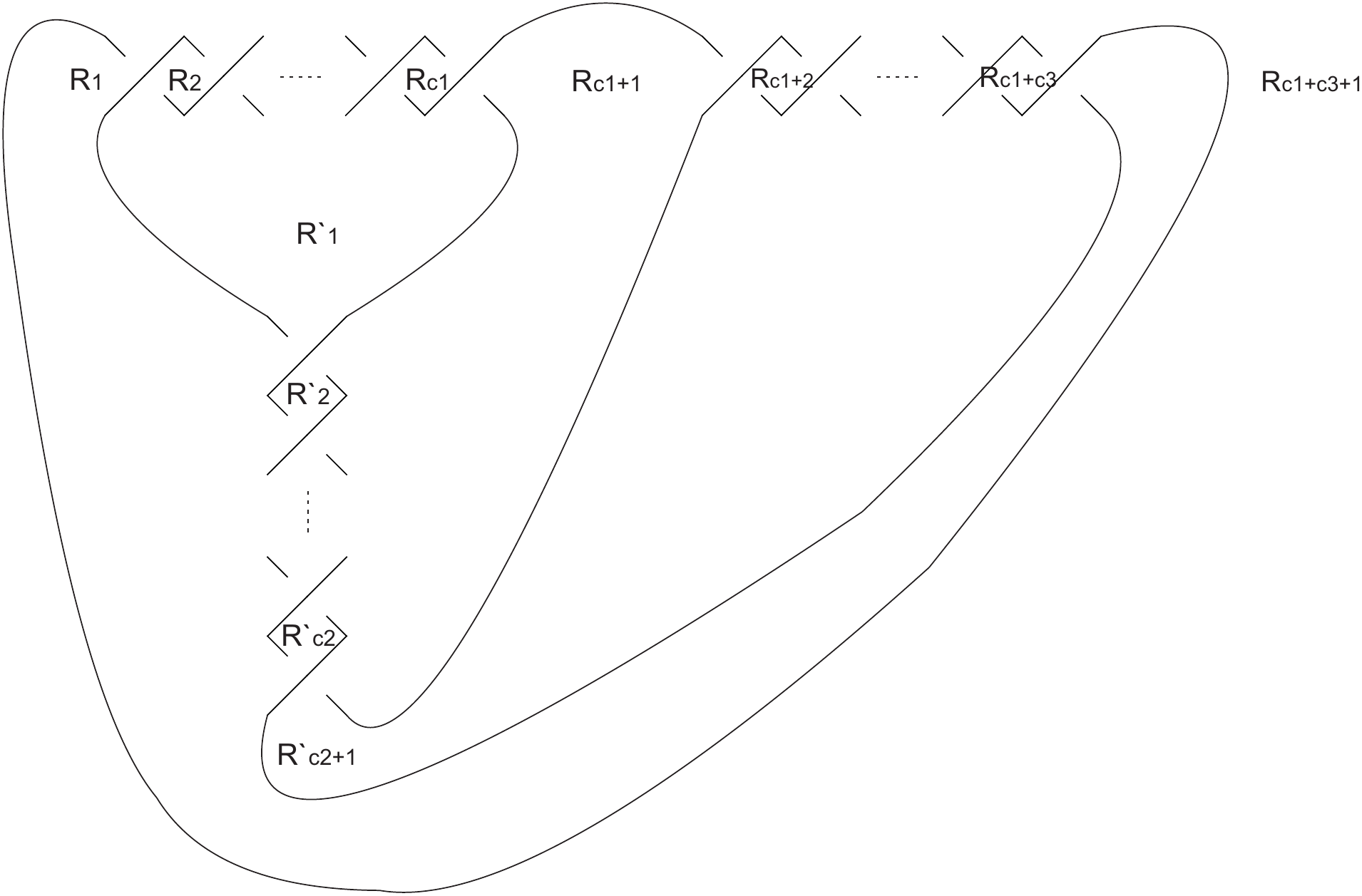}
                \caption{}
                \label{odddata}
        \end{subfigure}
\vspace*{8pt}
\caption{Region Data for 2-bridge knot}\label{regiondata2-bridge}
\end{center}
\end{figure}

\noindent If $n \not\equiv 0\ (mod\  4)$, then region crossing changes at any non-consecutive  $\frac{n-2}{4}$ regions of  $R'_j(3\leq j\leq n )$, transforms the diagram $C(m,\ n)$ into $C(m,\ 2)$. Note that each of these region crossing change reduces the sum of signs of crossings by $4$. Then region crossing change at any non-consecutive  $\frac{n-2}{4}$ regions of  $R'_j(3\leq j\leq n )$ and  $R'_1$ in $C(m,\ n)$ results in a trivial knot diagram. It is easy to observe that these are the minimum number of regions required to convert $C(m,\ n)$ to a trivial knot diagram. Hence,  $u_R(K)= \frac{n-2}{4} +1 = \lfloor\frac{n+2}{4}\rfloor$\\

\noindent \textbf{\underline{Case (ii)}}  If $m$ is even, $n$ is odd:\\
Observe that by making region crossing changes at any non-consecutive $\lfloor\frac{n+2}{4}\rfloor$ regions from $R'_j (2\leq j\leq n)$ in $t_n'$ tangle, the resultant diagram will be a diagram of either $(2,m+1)$ or $(2,m-1)$ torus knot. Since $u_R(2,q) = \lfloor\frac{q+2}{4}\rfloor$, $u_R(K)\leq \lfloor\frac{n+2}{4}\rfloor + \lfloor\frac{m+3}{4}\rfloor$ or $ u_R(K)\leq \lfloor\frac{n+2}{4}\rfloor + \lfloor\frac{m+1}{4}\rfloor$.
Note that, here we make region crossing changes in both $t_n'$ and $t_m$ tangles. But if we first make region crossing changes in $t_m$ as in case (i), we get $u_R(K) \leq \lfloor\frac{m+2}{4}\rfloor $.\\

\noindent Since these are the only possibilities of choices of regions to convert $K$ to an unknot and since $\lfloor\frac{m+2}{4}\rfloor \leq \lfloor\frac{n+2}{4}\rfloor + \lfloor\frac{m+3}{4}\rfloor$ and $\lfloor\frac{n+2}{4}\rfloor + \lfloor\frac{m+1}{4}\rfloor$, we have $u_R(K) = \lfloor\frac{m+2}{4}\rfloor $.\\

\noindent \textbf{\underline{Case (iii)}}  If $n$ is even and $m$ is odd:\\  Proof is similar to case when $m$ is even and $n$ is odd as $C(m,\ n) = C(n,\ m)$. In this case $u_R(K) = \lfloor\frac{n+2}{4}\rfloor $.\\

\noindent \textbf{\underline{Case (iv)}}  If both $m$ and $n$ are odd:\\ It is easy to observe that neither $m$ nor $n$ separately can reduce to $0$. Using the same procedure as in case 2, if  $n \equiv 1\ (mod\ 4)$, we get $ u_R(K) = \lfloor\frac{n+2}{4}\rfloor + \lfloor\frac{m+3}{4}\rfloor = \frac{m+n}{4} $.\\

\noindent If $n \equiv -1\ (mod\ 4)$, then $ u_R(K) = \lfloor\frac{n+2}{4}\rfloor + \lfloor\frac{m+1}{4}\rfloor = \frac{m+n}{4} $. Hence $u_R(K)=\frac{m+n}{4}$. \hfill $\square$
\end{proof}

\noindent To provide region unknotting number for $2$-bridge knots of type $C(m,\ 2,\ m),\ C(m,\ 2,\ m\pm1)$, first we provide an upper bound for region unknotting number for a general class of $2$-bridge knot whose Conway's notation is $C(m,\ 2,\ n)$. Note that the 2-bridge knot $C(m,\ 2,\ n)$ is a $2$-component link $L=K_1 \cup K_2$ iff $m\equiv n\ (mod\ 2)$. Also $lk(K_1,\ K_2)=$
$\begin{cases}
\frac{m+n}{2}& \text{\emph{if both $m$ and $n$ are even};}\\
\frac{m+n+2}{2}& \text{\emph{if both $m$ and $n$ are odd}}.
\end{cases}$.\\
It is easy to calculate that, in both the cases, link $L$ will be proper iff $m\equiv n\ (mod\ 4)$. For $2$-bridge knots and proper links $C(m,\ 2,\ n)$, we  have the following upper bound.

\begin{theorem}\label{5.thm27}
For 2-bridge knot $K$ with Conway's notation $C(m,\ 2,\ n)$,
\[u_R(K)\leq \left\lfloor\frac{|m-n|+2}{4}\right\rfloor+1.\]
\end{theorem}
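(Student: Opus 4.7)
\noindent The plan is to establish the upper bound by exhibiting an explicit sequence of $\lfloor\frac{|m-n|+2}{4}\rfloor+1$ region crossing changes on the (unique) minimal diagram of $C(m,2,n)$ whose cumulative effect produces a diagram of the unknot. Since $C(m,2,n)\sim C(n,2,m)$, we may assume without loss of generality that $m\geq n$. Recall that the minimal diagram consists of a horizontal integer tangle $t_m$, a vertical tangle $t_2'$, and a horizontal tangle $t_n$, joined in sequence, with $m+n+2$ crossings in total.

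\noindent The argument would split into two phases. In the first phase I would perform $\lfloor\frac{m-n+2}{4}\rfloor$ region crossing changes entirely within the $t_m$ tangle, at its non-consecutive bigon regions (together with one outer region of $t_m$ in the relevant boundary case), exactly mimicking the pattern used in Case~(i) or~(iii) of Theorem~\ref{5.thm23} to unknot a $(2,m-n)$ torus knot or link. Each bigon change flips $4$ crossings of $t_m$ and reduces the sign sum of $t_m$ by $4$; the extremal change reduces it by a different amount. Collectively these changes drive the signed $t_m$ sub-tangle to be isotopic to $t_n$, so the diagram now represents the symmetric $2$-bridge knot/link $C(n,2,n)$. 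In the second phase I would perform one additional region crossing change at a distinguished symmetric region adjacent to the central $t_2'$ tangle, chosen so that its boundary touches corresponding crossings in each of the two resulting $t_n$ sub-tangles; this final change unknots the $C(n,2,n)$-type diagram.

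\noindent The main obstacle I anticipate lies in the second phase: one must identify the explicit region whose boundary, combined with the Phase~1 sign flips, produces a signed rational tangle with continued fraction $0/1$ or $\pm 1/0$, hence representing the unknot. This requires a careful evaluation of the continued fraction $n+1/(\pm 2+1/n)$ with appropriately flipped inner-crossing signs, together with a parity split between the knot case $m\not\equiv n\pmod{2}$ and the proper-link case $m\equiv n\pmod{4}$ noted just above the theorem. The first phase is then essentially a translation of the bookkeeping in the proof of Theorem~\ref{5.thm23} to the $t_m$ tangle, so the principal new content is the design and verification of the single Phase~2 region crossing change.
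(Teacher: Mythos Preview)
Your plan is essentially the paper's proof with the two phases executed in the opposite order. The paper first performs the single distinguished region crossing change at a region $R$ adjacent to $t_2'$ (displayed in Figure~\ref{m2n}), observing that this alone converts $C(m,2,n)$ into a diagram of the $(2,n-m)$ torus knot/link; the remaining $\left\lfloor\frac{|m-n|+2}{4}\right\rfloor$ changes, taken in the larger of $t_m,\ t_n$, then unknot that torus diagram by the known result for $(2,q)$ torus knots. Since region crossing changes commute, the two orderings amount to the same collection of regions, but performing the special change first has the mild advantage that the parity split you anticipate evaporates: one verifies once that $R$ yields a $(2,n-m)$ torus diagram, rather than separately checking that $C(n,2,n)$ and $C(n\pm 1,2,n)$ are each unknotted by a single region change.
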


\begin{proof}
Observe that after a region crossing change at region $R$ as in  Figure \ref{m2n}, the resultant diagram will be a diagram of $(2,\ n-m)$-type torus knot/link. Since region unknotting number for $(2,\ n-m)$ torus knot/link is $\left\lfloor\frac{|m-n|+2}{4}\right\rfloor$, it is easy to observe that region crossing changes at any non-consecutive $\left\lfloor\frac{|m-n|+2}{4}\right\rfloor$  regions in $t_m$ (if $m>n$) or in $t_n$ (if $n>m$) together with region crossing change at $R$ in $C(m,\ 2,\ n)$ provide trivial knot diagram. Thus \[u_R(K)\leq \left\lfloor\frac{|m-n|+2}{4}\right\rfloor+1.\] \hfill $\square$
\begin{figure}
\begin{center}
\includegraphics[width=7cm,height=4cm]{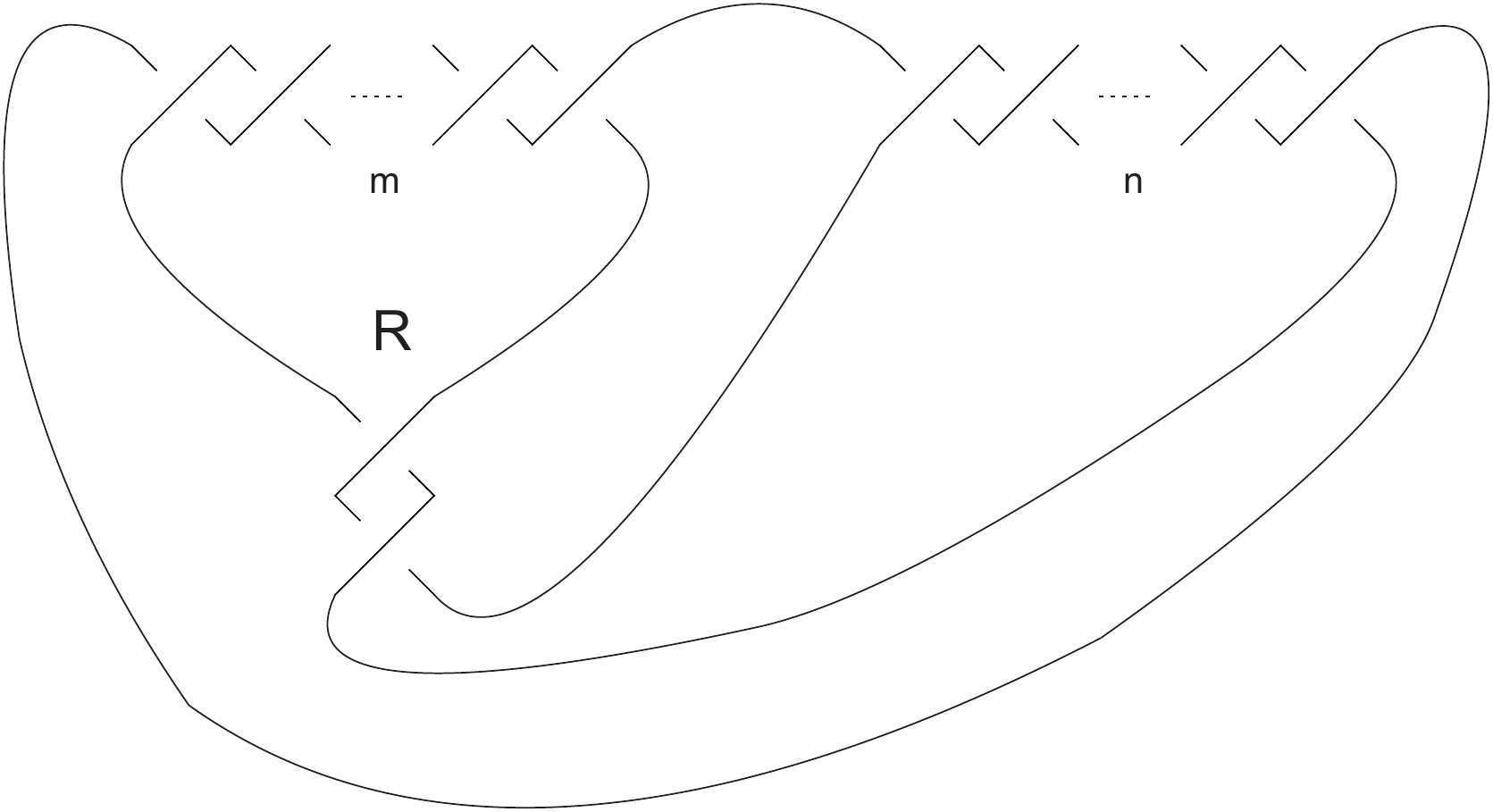}
\vspace*{8pt}
\caption{$C(m,\ 2,\ n)$}\label{m2n}
\end{center}
\end{figure}
\end{proof}

\begin{corollary}
Region unknotting number for 2-bridge knot/link $C(m,\ 2,\ m)$ and $C(m,\ 2,\ m\pm1)$ is one.
\end{corollary}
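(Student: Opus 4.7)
The plan is to apply Theorem \ref{5.thm27} directly; the corollary is essentially an evaluation of the upper bound formula at the specific values $n = m$ and $n = m \pm 1$, together with the observation that the resulting knot/link is non-trivial (so at least one region crossing change is needed).

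First, I would verify that the hypotheses of Theorem \ref{5.thm27} are satisfied in each case. For $K = C(m,\ 2,\ m)$, both tangles in the outer positions have the same value $m$, so $m \equiv m \pmod{4}$ and $K$ is a $2$-bridge knot (when $m$ is odd) or a proper $2$-component link (when $m$ is even), so region crossing change is an unknotting operation. For $K = C(m,\ 2,\ m\pm 1)$, the outer values have opposite parity, so by the parity criterion recalled before Theorem \ref{5.thm27}, $K$ is a genuine $2$-bridge knot, to which the theorem applies.

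Next, I would plug in. For $C(m,\ 2,\ m)$, we have $|m - n| = 0$, so Theorem \ref{5.thm27} yields
\[
u_R(K) \le \left\lfloor \frac{0+2}{4}\right\rfloor + 1 = 1.
\]
For $C(m,\ 2,\ m\pm 1)$, we have $|m - n| = 1$, so
\[
u_R(K) \le \left\lfloor \frac{1+2}{4}\right\rfloor + 1 = 1.
\]
The matching lower bound $u_R(K) \ge 1$ follows because these $2$-bridge knots/links are non-trivial: their minimal alternating diagrams have $2m+2$ or $2m+3$ crossings (at least $4$ for $m\ge 1$), and no non-trivial knot/link has region unknotting number zero. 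Combining the two inequalities gives $u_R(K) = 1$ in all three cases.

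There is no real obstacle here; the only subtlety worth flagging is confirming properness in the link case $C(m,\ 2,\ m)$, which is immediate from the criterion $m \equiv n \pmod{4}$ mentioned just before Theorem \ref{5.thm27}, and checking that the knot/link is not already trivial so that the lower bound $u_R(K) \ge 1$ is valid.
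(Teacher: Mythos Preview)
Your proposal is correct and follows the same approach as the paper: simply evaluate the bound of Theorem~\ref{5.thm27} at $n=m$ and $n=m\pm1$ to get $u_R\le 1$, and note non-triviality for the matching lower bound. One minor inaccuracy that does not affect the argument: $C(m,2,m)$ is a $2$-component link for \emph{every} $m$ (since $m\equiv m\pmod 2$), not a knot when $m$ is odd; but as you observe, $m\equiv m\pmod 4$ guarantees properness, so Theorem~\ref{5.thm27} still applies.
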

\begin{proof}
It is clear from Theorem \ref{5.thm27} that if $n = m$ or $m\pm1$ then $\left\lfloor\frac{|m-n|+2}{4}\right\rfloor = 0$. Hence $u_R(C(m,\ 2,\ n)) = 1$.
\end{proof}

\noindent In case of $2$-bridge link $C(m,\ p,\ n)$, we consider different cases depending on the values of $m$, $n$ and $p$. Note that $C(m,\ p,\ n)$ is not proper in the following cases:
\begin{enumerate}
\item when both $m$ and $n$ are even and $m\not\equiv n\ (mod\ 4)$
\item when both $m$ and $n$ are odd and $p$ is even and $m+n+p\equiv 2\ (mod\ 4)$.
\end{enumerate}

\begin{theorem}\label{5.thm28}
For 2-bridge knot $K$ with Conway's notation $C(m,\ p,\ n)$, where either $m$ or $n$ is even,
\[u_R(K)\leq \left\lfloor\frac{m+n+2}{4}\right\rfloor.\]
\end{theorem}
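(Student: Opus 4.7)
The approach is to generalize Theorem~\ref{5.thm27} to the case where the middle vertical tangle $t_p'$ is arbitrary. By the symmetry $C(m,p,n)\sim C(n,p,m)$, I may assume without loss of generality that $m$ is even. My goal is to exhibit $\lfloor (m+n+2)/4\rfloor$ region crossing changes in the minimal diagram of $C(m,p,n)$ whose combined effect yields a trivial knot diagram.

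The plan is to first perform interior region crossing changes inside the horizontal tangle $t_m$ to reduce its algebraic twist count to $0$ or $2$, using $\lfloor m/4\rfloor$ changes (with an adjustment of $+1$ when $m\equiv 2\pmod 4$), mirroring the strategy of Theorem~\ref{5.thm23}. Next, I perform an analogous sequence of interior region crossing changes inside $t_n$, reducing its twist count to the smallest residue allowed by its parity. At this stage the diagram represents a 2-bridge knot of the form $C(m',p,n')$ with $m'\in\{0,2\}$ and $n'$ small, which is close to the setting of Theorem~\ref{5.thm27}. Finally, one additional region crossing change at an outer region of the diagram (analogous to the region $R$ in Figure~\ref{m2n}) whose boundary contains all $p$ crossings of $t_p'$ together with the residual crossings of $t_m$ and $t_n$ should simultaneously eliminate the vertical $p$-tangle and absorb the residual horizontal twists, producing a trivial diagram.

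A parity case analysis on $m\bmod 4$ and $n\bmod 4$ then collects the counts and confirms that $\lfloor m/4\rfloor+\lfloor n/4\rfloor+(\text{residual correction})$ is at most $\lfloor (m+n+2)/4\rfloor$ in each subcase, establishing the claimed upper bound.

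The main obstacle is justifying the existence and effect of the outer region $R^*$ whose single region crossing change absorbs the entire vertical tangle $t_p'$ regardless of the value of $p$. Since the bound in the theorem is independent of $p$, the crux of the argument is precisely this $p$-insensitivity: no matter how large $p$ is, a suitably chosen outer region must flip all $p$ crossings in concert with a few horizontal ones in such a way that the resulting tangle trivializes. Handling the parity of $p$ in tandem with the parities of $m$ and $n$, and ruling out any residual obstruction from $t_p'$ after the outer change, will require careful bookkeeping analogous to (but more delicate than) the argument of Theorem~\ref{5.thm27}.
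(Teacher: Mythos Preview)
Your diagnosis of the difficulty is inverted. You correctly observe that the bound is independent of $p$, but you then propose to achieve this $p$-insensitivity by locating an outer region $R^*$ whose boundary meets all $p$ crossings of $t_p'$ and whose change ``eliminates'' the vertical tangle. This is both unnecessary and unlikely to work as stated: flipping every crossing of $t_p'$ turns it into $t_{-p}'$, not into a trivial tangle, so even if such a region existed the resulting diagram would still carry a nontrivial vertical $p$-block.

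The paper's argument avoids $t_p'$ entirely. Since $m$ is even, region crossing changes at the horizontal bigons $R_3,R_7,\ldots,R_{3+4\lfloor (m-2)/4\rfloor}$ reduce the algebraic twist count of $t_m$ to zero (with one of these changes possibly spilling a single crossing into $t_n$ when $m\equiv 2\pmod 4$). The crucial observation is that once $t_m$ has algebraic count $0$, the diagram is a diagram of $C(0,p,n')$ with $n'\in\{n,n-2\}$, and this knot type is simply the $(2,n')$ torus knot: with the leftmost horizontal tangle trivial, the $p$ vertical twists are nugatory and slide off by Reidemeister moves, \emph{for every} $p$. That is the source of the $p$-independence. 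One then unknots the remaining $(2,n')$ torus knot with $\lfloor (n'+2)/4\rfloor$ further horizontal region changes, and the total $\lfloor (m+2)/4\rfloor+\lfloor (n'+2)/4\rfloor$ equals $\lfloor (m+n+2)/4\rfloor$ in both residue cases. No ``delicate bookkeeping on the parity of $p$'' is required, because $p$ never enters the count.
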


\begin{proof}
Without loss of generality, assume that $m$ is even. After region crossing changing at the regions $\langle R_3, R_7, \cdots,$ $R_{3 + 4\cdot \lfloor\frac{m-2}{4}\rfloor}\rangle$, the resultant diagram is a diagram of either $(2, n)$ or $(2,n-2)$ type torus knot based on whether $m\equiv\ 0\ (mod\ 4)$ or  $m\equiv\ 2\ (mod\ 4)$. Since region unknotting number for $(2, q)$ type torus knot is $\lfloor\frac{q+2}{4}\rfloor$, the region crossing changes at $\langle R_3, R_7, \cdots, R_{3 + 4\cdot \lfloor\frac{m-2}{4}\rfloor}, \cdots, R_{3 + 4\cdot \lfloor\frac{m+n-2}{4}\rfloor}\rangle$ regions transform $C(m,\ p,\ n)$ to a diagram of trivial knot. Hence the number of region crossing changes to unknot $C(m,\ p,\ n)$ is either $\lfloor\frac{m+2}{4}\rfloor + \lfloor\frac{n+2}{4}\rfloor$ or $\lfloor\frac{m+2}{4}\rfloor + \lfloor\frac{n}{4}\rfloor$ based on whether $m\equiv\ 0\ (mod\ 4)$ or  $m\equiv\ 2\ (mod\ 4)$. Hence \[u_R(K)\leq \left\lfloor\frac{m+n+2}{4}\right\rfloor.\] \hfill $\square$
\end{proof}

\begin{remark}
From Theorem \ref{5.thm28}, it is easy to observe that $u_R(C(2,\ p,\ 3)) = 1$ for any $p$. As shown in Figure \ref{2p3}, region crossing change at  $R_3$, results in a trivial knot diagram.
\begin{figure}
\begin{center}
\includegraphics[width=4cm,height=4cm]{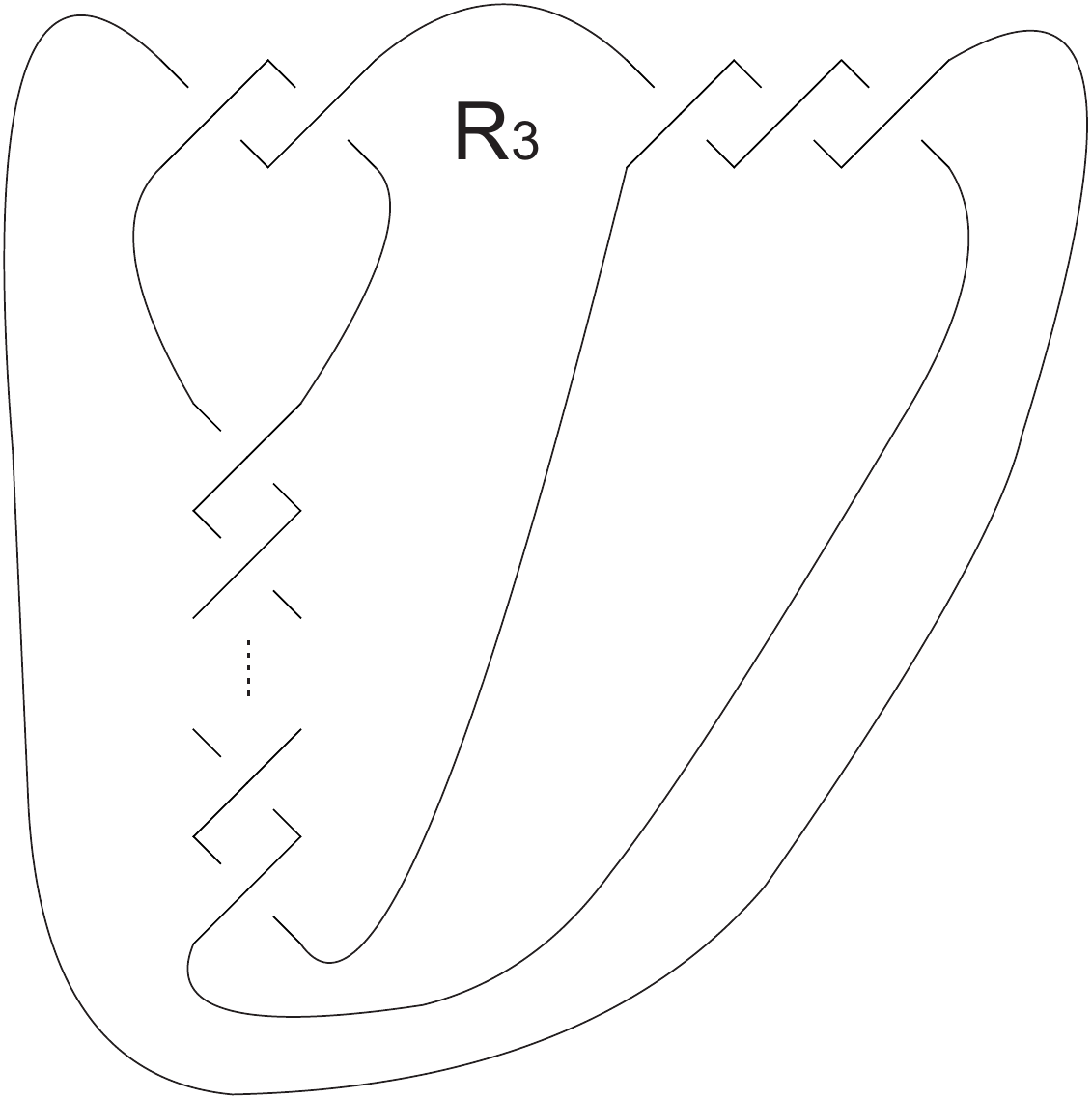}~~~~~~~~
\includegraphics[width=4cm,height=4cm]{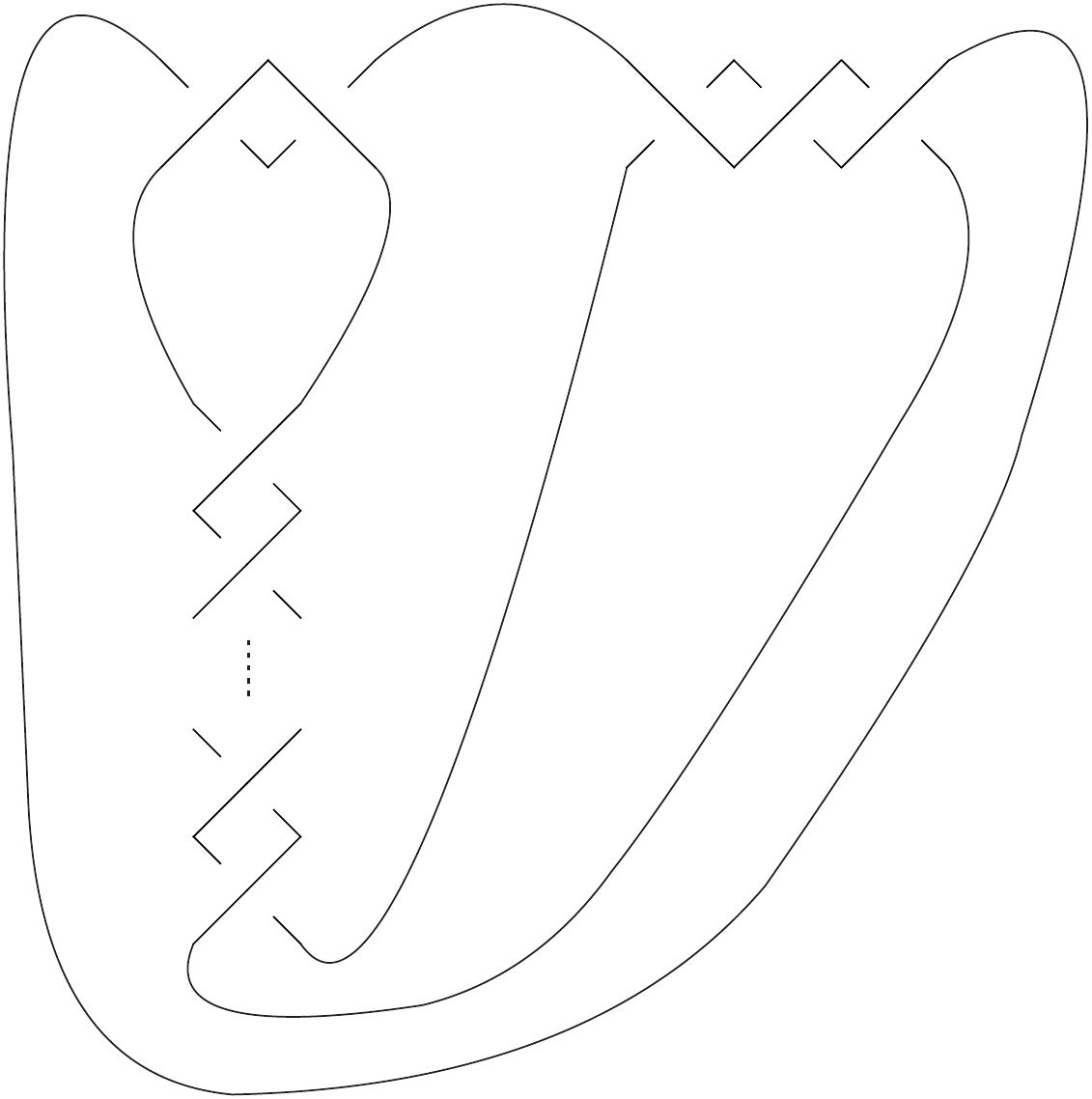}
\vspace*{8pt}
\caption{$C(2\ p\ 3)$}\label{2p3}
\end{center}
\end{figure}
\end{remark}

\begin{theorem}\label{5.thm29}
For 2-bridge knot/proper link $K$ with Conway's notation $C(m,\ p,\ n)$, where $p$ is even, we have
\begin{enumerate}
\item when $p \equiv 2\ (mod\ 4)$, \[u_R(K)\leq \left\lfloor\frac{|m-n|+2}{4}\right\rfloor+\left\lfloor\frac{p+2}{4}\right\rfloor\]
\item  when $p \equiv 0\ (mod\ 4)$,
\begin{align*}
  u_R(K)  \leq
  \begin{cases}
  \left\lfloor\frac{m+n+2}{4}\right\rfloor        & \text{if  either $m$ or $n$ is even }\\
   \frac{m+n+p}{4}                                & \text{if  both $m$ and $n$ are odd}
  \end{cases} .
 \end{align*}
\end{enumerate}
\end{theorem}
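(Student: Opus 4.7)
My plan is to shrink the middle vertical tangle $t_p'$ of $C(m,p,n)$ via region crossing changes carried out inside it, and then invoke the bounds already obtained for simpler $2$-bridge diagrams. I handle the two residues of $p$ modulo~$4$ separately.

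For part~(1), with $p\equiv 2\pmod{4}$: I would first perform $(p-2)/4$ region crossing changes at non-consecutive internal regions of $t_p'$. Each such region is bounded by a pair of adjacent, same-signed crossings of the vertical $2$-strand braid, so each change flips both of its boundary crossings and reduces the absolute value of the sum of signs of $t_p'$ by~$4$. After these $(p-2)/4$ changes the vertical tangle has net twist $2$, and the underlying diagram represents $C(m,2,n)$. I would then apply the $\lfloor(|m-n|+2)/4\rfloor+1$ region crossing changes prescribed in the proof of Theorem~\ref{5.thm27} — the distinguished region $R$ of Figure~\ref{m2n} together with $\lfloor(|m-n|+2)/4\rfloor$ non-consecutive regions inside the larger of $t_m, t_n$ — to obtain a diagram of the trivial knot. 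The total is $(p-2)/4+1+\lfloor(|m-n|+2)/4\rfloor = \lfloor(p+2)/4\rfloor + \lfloor(|m-n|+2)/4\rfloor$, matching the claimed bound.

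For part~(2), with $p\equiv 0\pmod{4}$: when either $m$ or $n$ is even, Theorem~\ref{5.thm28} already furnishes the bound $\lfloor(m+n+2)/4\rfloor$ without any hypothesis on $p$, so no further argument is needed. When both $m$ and $n$ are odd, properness of the link forces $m+n\equiv 0\pmod{4}$; here I would perform $p/4$ internal region crossing changes in $t_p'$ to reduce its net twist from $p$ to $0$, so that the resulting diagram represents the $(2,m+n)$ torus knot/link. By the main result of \cite{regionunko}, this has region unknotting number $\lfloor(m+n+2)/4\rfloor = (m+n)/4$, and the regions it uses lie entirely inside $t_m\cup t_n$ and hence coincide with genuine regions of the original $C(m,p,n)$ diagram. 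Summing gives $p/4 + (m+n)/4 = (m+n+p)/4$.

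The main obstacle I anticipate is the bookkeeping needed to identify the distinguished region $R$ of Theorem~\ref{5.thm27}, originally described in the smaller $C(m,2,n)$ diagram, with a face of the larger $C(m,p,n)$ diagram once $t_p'$ is drawn with $p$ (rather than $2$) crossings. Since a region crossing change leaves the planar face structure unaltered, the faces of $C(m,p,n)$ are fixed throughout, and since $R$ adjoins the vertical tangle on only one side, this reduces to a local check at the interface between $t_p'$ and each of $t_m, t_n$. If the naive analog of $R$ in $C(m,p,n)$ also flips one or more extra crossings inside $t_p'$, then — because region crossing changes commute and only the final multiset of flipped crossings matters — I can absorb the extra flips by adjusting which internal regions of $t_p'$ are chosen, without changing the total count.
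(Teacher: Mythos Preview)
Your proposal is correct and follows essentially the same approach as the paper: in both cases you kill the vertical tangle $t_p'$ with $\lfloor (p+2)/4\rfloor$ region crossing changes and then unknot the resulting $(2,q)$ torus diagram inside $t_m\cup t_n$, invoking Theorem~\ref{5.thm28} for the case with $m$ or $n$ even. The only cosmetic difference is that for $p\equiv 2\pmod 4$ you pass through $C(m,2,n)$ and then quote Theorem~\ref{5.thm27}, whereas the paper names the regions $R_3',R_7',\ldots,R_{3+4\lfloor (p-2)/4\rfloor}'$ directly and lands on the $(2,m-n)$ torus knot in one step; your final paragraph correctly handles the face-identification bookkeeping that this modular phrasing introduces.
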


\begin{proof}
\noindent \textbf{\underline{Case (i)}} Observe that after region crossing changes at $\langle R_3', R_7', \cdots, R_{3 + 4\cdot \lfloor\frac{p-2}{4}\rfloor}'\rangle$ regions in $C(m,\ p,\ n)$, the resultant diagram is a diagram of $(2, m-n))$-type torus knot/link. Since region unknotting number for $(2,n-m)$ torus knot/link is  $\left\lfloor\frac{|m-n|+2}{4}\right\rfloor$, by selecting any non-consecutive $\left\lfloor\frac{|m-n|+2}{4}\right\rfloor$ regions in $t_m$ (if $m>n$) or in $t_n$ (if $n>m$) results in a trivial knot diagram. Thus \[ u_R(K)\leq \left\lfloor\frac{|m-n|+2}{4}\right\rfloor+\left\lfloor\frac{p+2}{4}\right\rfloor.\]

\noindent \textbf{\underline{Case (ii)}} If  either $m$ or $n$ is even, then proof directly follows from Theorem \ref{5.thm28}. When both $m$ and $n$ are odd, it is easy to observe that after region crossing changes at $\langle R_3', R_7', \cdots, R_{3 + 4\cdot \lfloor\frac{p-2}{4}\rfloor}'\rangle$ regions, the resultant diagram is a diagram of $(2,\ m+n))$-type torus knot/link. Since region unknotting number for $(2,\ m+n)$ torus knot/link is $\lfloor\frac{m+n+2}{4}\rfloor$, region crossing changes at $\langle R_3, R_7, \cdots, R_{3 + 4\cdot \lfloor\frac{m+n-2}{4}\rfloor}, R_3', R_7', \cdots, R_{3 + 4\cdot \lfloor\frac{p-2}{4}\rfloor}'\rangle$ in $C(m,\ p,\ n)$ transform $C(m,\ p,\ n)$ to a diagram of trivial knot. Observe that the number of region crossing changes is equal to $ \left\lfloor\frac{m+n+2}{4}\right\rfloor+\left\lfloor\frac{p+2}{4}\right\rfloor $. Since, in this case, $2$-bridge link $C(m,\ p,\ n)$ is proper iff $m+n\equiv 0\ (mod\ 4)$, we have $\lfloor\frac{m+n+2}{4}\rfloor = \frac{m+n}{4}$. Thus \[u_R(K)\leq  \frac{m+n+p}{4}.\] \hfill $\square$
\end{proof}

\noindent In $C(m,\ p,\ n)$, if $p$ is odd and either $m$ or $n$ is even, by Theorem \ref{5.thm28}, we have $u_R(K)\leq \left\lfloor\frac{m+n+2}{4}\right\rfloor$. In case when both $m$ and $n$ are odd, take $k = min\{m,\ n\}$. If either one of $m$ or $n$ is odd then consider $k$ to be that integer which is odd. In the following theorem, consider $k$ as defined above.

\begin{theorem}\label{5.thm31}
For 2-bridge knot $K$ with Conway's notation $C(m,\ p,\ n)$, where $p$ is odd and
\begin{enumerate}
\item if $p+k\equiv 0\ (mod\ 4)$, then \[u_R(K)\leq  \frac{p+k}{4}\]
\item if $p+k\equiv 2\ (mod\ 4)$, then \[u_R(K)\leq \frac{p+k+2}{4}.\]
\end{enumerate}
\end{theorem}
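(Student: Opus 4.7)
The plan is to mirror the construction of Theorem \ref{5.thm29}, pairing region crossing changes in $t_p$ with region crossing changes in the $k$-tangle, since the odd $p$-tangle cannot be trivialised on its own by changes confined to $t_p$ alone.

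First, using $C(m,p,n)\sim C(n,p,m)$, I arrange the diagram so that the tangle of size $k$ sits in a fixed position (say on the right, $n=k$). Next, I perform $\lfloor(p+2)/4\rfloor$ region crossing changes at non-consecutive regions of $t_p$, using the same choice $\langle R_3', R_7', \ldots, R_{3+4\lfloor(p-2)/4\rfloor}'\rangle$ as in Theorem \ref{5.thm29}. Because $p$ is odd, this reduces the $p$-tangle to $\pm 1$ residual twist rather than to $0$; one then checks that the resulting diagram is equivalent on $S^2$ to a $(2,q)$-type torus knot/link, where $q$ is determined by $m$, $k$ and the residues of $p,k$ modulo $4$. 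Applying $u_R(2,q)=\lfloor(q+2)/4\rfloor$ in the residual diagram (via regions selected inside $t_k$ as in case (i) of Theorem \ref{5.thm23}), and then combining the two counts with a short parity calculation, yields $\lfloor(p+2)/4\rfloor+\lfloor(k+2)/4\rfloor=(p+k)/4$ when $p+k\equiv 0\pmod 4$, and $(p+k+2)/4$ when $p+k\equiv 2\pmod 4$ (the extra unit coming from a single additional change at a boundary region $R_1$ or $R_{m+1}$ analogous to case (i) of Theorem \ref{5.thm23}).

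The main obstacle is the equivalence claim in the previous paragraph: one must rigorously verify that after the $\lfloor(p+2)/4\rfloor$ changes in $t_p$, the diagram really is isotopic on $S^2$ to a $(2,q)$-torus knot/link of the stated type. This amounts to tracking how each region crossing change acts on the Conway continued fraction of $C(m,p,n)$ and observing that the $\pm 1$ residual twist from $t_p$ combines with $m$ and $k$ in such a way that the larger of the outer tangles is absorbed, leaving only the $(2,p+k)$ structure. The case split between (i) and (ii) reflects whether this residual torus knot/link is already proper, in which case no further region crossing change is required, or is off by a linking contribution of $\pm 2$, in which case exactly one extra change completes the unknotting.
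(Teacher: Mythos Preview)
Your region selections ($\lfloor(p+2)/4\rfloor$ changes in $t_p'$ together with $\lfloor(k+2)/4\rfloor$ changes in $t_k$) are essentially the ones the paper uses, and your arithmetic for the total count is correct. The gap is in the justification you give for why those changes produce an unknot.

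Your central claim is that after the $\lfloor(p+2)/4\rfloor$ changes in $t_p'$ the resulting diagram is equivalent on $S^2$ to a $(2,q)$ torus knot/link in which ``the larger of the outer tangles is absorbed''. This is false. After those changes the $p$-tangle has net twist $\pm 1$, so the diagram represents $C(m,\pm 1,n)$; its continued fraction is $m + n/(n+1)$, which is never an integer for $n\geq 1$, so $C(m,\pm 1,n)$ is \emph{not} a $(2,q)$ torus knot, and the $m$-tangle is not absorbed. Consequently the formula $u_R(2,q)=\lfloor(q+2)/4\rfloor$ cannot be invoked for the ``residual diagram'', and the argument, as written, does not establish that the final diagram is trivial. (Your reference to ``case~(i) of Theorem~\ref{5.thm23}'' is also off: that case treats even tangles, whereas here $k$ is odd by definition.)

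The paper's argument avoids this by not passing through any torus-knot intermediate. It makes the changes in $t_p'$ and in $t_k$ together and observes that the result represents $C(\pm 1,\mp 1,n)$ (if $k=m$) or $C(m,\pm 1,\mp 1)$ (if $k=n$); the continued-fraction identity $C(\pm 1,\mp 1,\ast)=C(0,\ast)$ then shows this is the unknot. In case~(ii) one instead lands on $C(m,2)$ and a single further change at $R_1'$ finishes. The point is that the residual $\pm 1$ in $t_p$ cancels against the residual $\mp 1$ produced in $t_k$; the large outer tangle remains intact but is rendered harmless because $C(m,0)$ (or $C(0,n)$) is trivial. Replacing your torus-knot step with this cancellation identity would repair the proof.
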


\begin{proof}
\noindent \textbf{\underline{Case (i)}}  If $p+k\equiv 0\ (mod\ 4)$, then based on $ k =m$ or $n$, we make region crossing changes at either $ \langle R_{ 3}, R_{ 7}, \cdots, R_{3 + 4\cdot \lfloor\frac{n-2}{4}\rfloor}, R_{3}', R_{7}', \cdots, R_{3 + 4\cdot \lfloor\frac{p-2}{4}\rfloor}'\rangle$ or $ \langle R_{m + 3}, R_{m + 7}, \cdots, R_{m + 3 + 4\cdot \lfloor\frac{n-2}{4}\rfloor},$ $R_{3}',$  $R_{7}',\\ \cdots, R_{3 + 4\cdot \lfloor\frac{p-2}{4}\rfloor}'\rangle$ regions. After these region crossing changes we get a diagram of trivial knot. In any case, the resultant diagram is equivalent to diagram of $C(\pm1,\ \mp1,\ n) = C(0,\ n)$ or $C(m,\ \pm1,\ \mp1) = C(m,\ 0)$, which is a trivial knot. Observe that the number of region crossing changes is equal to $ \lfloor\frac{k+2}{4}\rfloor + \lfloor\frac{p+2}{4}\rfloor  = \frac{p+k}{4}$ and hence, $ u_R(K)\leq  \frac{p+k}{4}. $\\

\noindent \textbf{\underline{Case (ii)}}  If $p+k\equiv 2\ (mod\ 4)$, then we have either both $p$ and $k$ are $\equiv 1\ (mod\ 4)$ or both $p$ and $k$ are $\equiv 3\ (mod\ 4)$. If both $p$ and $k$ are $\equiv 1\ (mod\ 4)$, then based on $k = m$ or $n$, we make region crossing changes at $ \langle R_{3}, R_{7}, \cdots, R_{3 + 4\cdot \lfloor\frac{n-2}{4}\rfloor},$ $ R_{3}', R_{7}', \cdots, R_{3 + 4\cdot \lfloor\frac{p-2}{4}\rfloor}'\rangle$ or $ \langle R_{m + 3}, R_{m + 7}, \cdots, R_{m + 3 + 4\cdot \lfloor\frac{n-2}{4}\rfloor},$ $ R_{3}', R_{7}', \cdots, R_{3 + 4\cdot \lfloor\frac{p-2}{4}\rfloor}'\rangle$ regions. In case if  both $p$ and $k$ are $\equiv 3\ (mod\ 4)$, then based on $k = m$ or $n$, we make region crossing changes at $ \langle R_{ 3}, R_{7}, \cdots, R_{3 + 4\cdot \lfloor\frac{n-2}{4}\rfloor},$ $ R_{3}', R_{7}', \cdots, R_{3 + 4\cdot \lfloor\frac{p-4}{4}\rfloor}'\rangle$ or $ \langle R_{m + 3}, R_{m + 7}, \cdots, R_{m + 3 + 4\cdot \lfloor\frac{n-2}{4}\rfloor},$ $ R_{3}', R_{7}', \cdots,$ $R_{3 + 4\cdot \lfloor\frac{p-4}{4}\rfloor}'\rangle$ regions. In case if $p = 3$, then we make no region region crossing changes in $t_p'$. After these region crossing changes, we get a diagram of a 2-bridge knot $C(m,\ 2)$. Then one more region crossing change at $R_1'$ transforms $C(m,\ p,\ n)$ to an unknot. Observe that the number of region crossing changes is equal to $ \lfloor\frac{k+2}{4}\rfloor+\lfloor\frac{p+2}{4}\rfloor+1 $ or $ \lfloor\frac{k+2}{4}\rfloor+\lfloor\frac{p}{4}\rfloor+1 $ respectively. Thus $ u_R(K)\leq \frac{p+k+2}{4}.$ \hfill $\square$
\end{proof}

\begin{remark}\label{5.rmk4}
Note that in case of those 2-bridge knots which occurs in more than one category, we consider upper bound for region unknotting number to be minimum of the upper bounds from all undertaken categories. For example 2-bridge knot $C(8,\ 5,\ 3)$ satisfies the hypothesis of Theorem \ref{5.thm28} and Theorem \ref{5.thm31}. By Theorem \ref{5.thm28}, $ u_R(K)\leq 3 $, as region crossing changes at $R_3$, $R_7$ and $R_{11}$ regions in Figure \ref{853}, transforms $C(8,\ 5,\ 3)$ into trivial knot and by Theorem \ref{5.thm31}, $ u_R(K)\leq 2 $, as region crossing changes at $R_{11}$ and $R'_3$ makes it unknot. So $u_R(C(8,\ 5,\ 3)) \leq 2$.
\end{remark}

\begin{figure}
\begin{center}
\includegraphics[width=5cm,height=4cm]{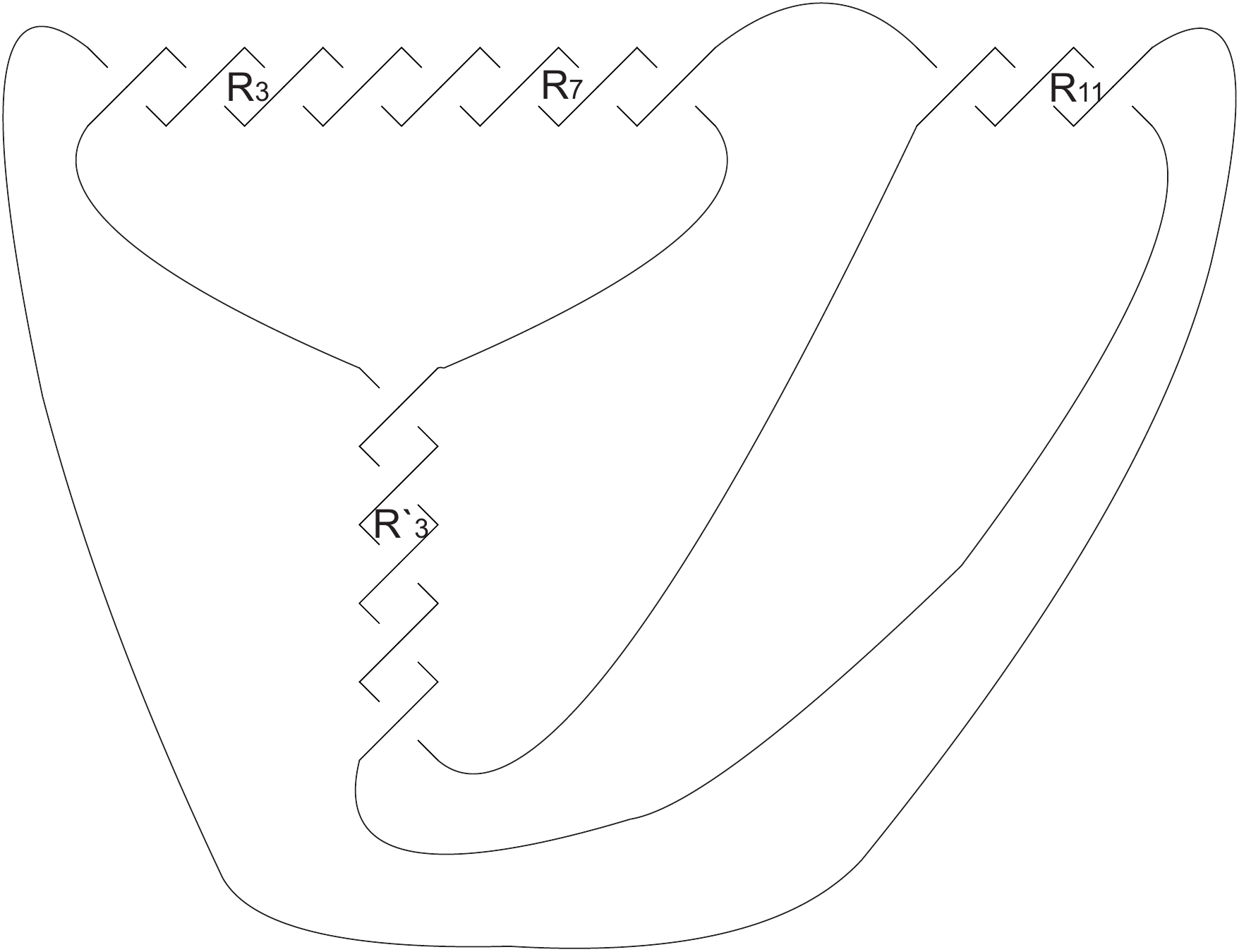}
\vspace*{8pt}
\caption{Region unknotting number for $C(8,\ 5,\ 3)$}\label{853}
\end{center}
\end{figure}

\begin{figure}
\begin{center}
\includegraphics[width=6cm,height=1cm]{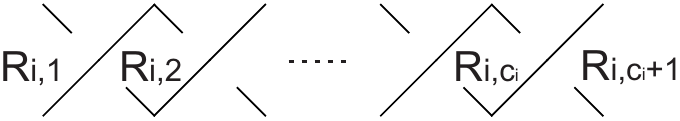}
\vspace*{8pt}
\caption{Region Data for integer tangle $t_{c_i}$}\label{integerdata}
\end{center}
\end{figure}

\noindent A general upper bound for region unknotting number is given for all $2$-bridge knot/proper links. Due to generality, here we consider regions of each integer tangle ($t_{c_i}$ or $t_{c_i}'$) as $R_{i,1},\ R_{i,2},\ \cdots,\ R_{i,c_i+1}$ as in Figure \ref{integerdata}. Note that region $R_{i,c_i+1}$ of tangle $t_{c_i}$  is same as the region $R_{i+2,1}$ of $t_{c_{i+2}}$. Similarly the region $R_{i,c_i+1}$ of tangle $t_{c_i}'$ is same as the region $R_{i+2,1}$ of $t_{c_{i+2}}'$ respectively.\\

\noindent To provide an upper bound for $C(c_1,\ c_2, \cdots, c_n)$, first we construct a subset $L$ of $2\mathbb{N}$ as follows:
\begin{itemize}
\item for $j = 2$, if $c_1\equiv 0\  (mod\ 2)$ then $2 \in L$.
\item for next even integer $j = 4$, if $2\not\in L$ and $c_1+c_2+c_3 \equiv 0\ (mod\ 2)$ then $4 \in L$. If $2\in L$ and $c_1+c_3 \equiv 0\ (mod\ 2)$ then $4 \in L$.
\item Continuing in the same way, any even integer $j(\leq n)\ \in L$ if \[\sum_{\substack{i<j \\ i\not\in L}} c_i \equiv 0\ (mod\ 2).\]
\end{itemize}

\noindent In the following theorem, we will observe that by selectively choosing some region crossing changes, there is no need to make any region crossing change in $t_{c_j}'$, where $j\in L$, to transform $C(c_1,\ c_2, \cdots, c_n)$ to a trivial knot.
\begin{theorem}\label{5.thm24}
For 2-bridge knot/proper link $K$ with Conway's notation $C(c_1,\ c_2, \cdots, c_n)$,
\[u_R(K)\leq \left\lfloor\frac{\sum\limits_{i\not\in L} c_i\ +2}{4}\right\rfloor.\]
\end{theorem}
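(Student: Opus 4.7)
The plan is to extend the explicit region-selection constructions used in the proofs of Theorems \ref{5.thm28} and \ref{5.thm29} to an arbitrary Conway sequence. For each $i \notin L$, I would perform region crossing changes at the regions $R_{i,3}, R_{i,7}, \ldots$ within the $i$-th integer tangle, spaced four apart and starting at $R_{i,3}$, mirroring the pattern from the earlier theorems. No region crossing change is performed inside any tangle indexed by $j \in L$.

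The first main step is to verify that these operations transform $K$ into a trivial knot/link diagram. I would proceed by induction on the elements of $L$ taken in increasing order. For the smallest $j \in L$, the defining condition $\sum_{i < j,\ i \notin L} c_i \equiv 0 \pmod{2}$ guarantees that after processing the tangles $t_{c_i}$ with $i < j$ and $i \notin L$, the cumulative parity of crossings preceding the vertical tangle $t_{c_j}'$ is even. This even parity is exactly what lets $t_{c_j}'$ be absorbed into the surrounding structure via a tangle-sum/flyping identification, analogous to the way an appropriate parity condition made a vertical tangle irrelevant in the proofs of Theorems \ref{5.thm27} and \ref{5.thm29}. Iterating up the chain of indices in $L$ reduces the diagram to one in which only the processed tangles remain, concatenated into a trivial configuration.

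For the count, each processed tangle $t_{c_i}$ contributes at most $\lfloor (c_i+2)/4 \rfloor$ region crossing changes. Because the proper-link condition together with the construction of $L$ aligns the residues of the $c_i$ modulo $4$, the individual floors telescope into the single floor $\lfloor (\sum_{i \notin L} c_i + 2)/4 \rfloor$, exactly as happened in the counts carried out in Theorems \ref{5.thm28} and \ref{5.thm29}.

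The main obstacle is the first step: rigorously showing that the parity condition defining $L$ is precisely what is needed so that each $t_{c_j}'$ with $j \in L$ trivializes at no region-crossing-change cost. This requires tracking how successive region crossing changes alter the sandwich tangles around each $t_{c_j}'$ and verifying that the flyping/tangle-sum identifications compose consistently across the entire Conway sequence. Once this absorption is established, the counting step is a routine verification of parities modulo $4$.
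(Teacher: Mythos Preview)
Your plan has the right overall shape, but the specific region selection you propose misses the key technical device in the paper's argument and, as written, does not achieve the stated bound.

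The paper does not start the region crossing changes at $R_{i,3}$ in each tangle with $i\notin L$. Instead, for each such $i$ it first records the residue $k_i\in\{-2,-1,0,1\}$ determined by $\sum_{j<i,\,j\notin L}c_j\equiv k_i\pmod 4$, and then performs the changes at the \emph{shifted} positions $R_{i,3-k_i},R_{i,7-k_i},\ldots,R_{i,3+4\lfloor(c_i-2)/4\rfloor-k_i}$ (doing nothing if $c_i+k_i<2$). Because $R_{i,c_i+1}=R_{i+2,1}$, this shift is exactly what makes the selected regions line up into a single global arithmetic progression $R_3,R_7,R_{11},\ldots$ through the concatenation of all tangles with index not in $L$. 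That global alignment is what guarantees a trivial diagram and is also what makes the total count come out to a single floor $\left\lfloor\big(\sum_{i\notin L}c_i+2\big)/4\right\rfloor$ rather than a sum of per-tangle floors.

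Without the shift, your counting step fails. The set $L$ is defined by a parity condition modulo $2$, not modulo $4$, so there is no mechanism forcing the residues of the individual $c_i$ to ``align'' mod $4$. Concretely, in $C(2,c_2,2)$ with $c_2$ arbitrary one has $2\in L$, so $\sum_{i\notin L}c_i=4$ and the theorem gives the bound $\lfloor 6/4\rfloor=1$; but your per-tangle estimate would give $\lfloor(2+2)/4\rfloor+\lfloor(2+2)/4\rfloor=2$. The floors do not telescope, and your unshifted choice $R_{1,3},R_{3,3}$ is two changes, not one. The fix is precisely the offset $k_i$: it pushes the starting index in later tangles so that one region can straddle the boundary between consecutive processed tangles, which is how the count collapses to the single floor. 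Once you incorporate that shift, both the trivialization claim for the $j\in L$ tangles and the count become direct, as in Theorem~\ref{5.thm25}.
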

\begin{proof}
Since any integer is either $\equiv -2,\ -1,\ 0$ or $1\ (mod\ 4)$, we can say that $ \sum_{\substack{i<j \\ i\not\in L}} c_i \equiv k_j\ (mod\ 4) $ where for each $j$ ($1\leq j \leq n$), $k_j = -2,\ -1,\ 0$ or $1$. Note that for each $j\in L$, after making region crossing changes at $R_{i,3-k_i}; R_{i,7-k_i}; \cdots; R_{i,3+4\cdot \left\lfloor\frac{c_i\ -2}{4}\right\rfloor-k_i}$ regions in each $c_i$ (for $i<j $, $i\not\in L$ and $c_i+k_i\geq 2$), $c_j$ can be untangle by just simple twists. Hence for each $j \not\in L$, if $c_j+k_j\geq 2$ then region crossing changes at $R_{j,3-k_j}; R_{j,7-k_j}; \cdots; R_{j,3+4\cdot \left\lfloor\frac{c_j\ -2}{4}\right\rfloor-k_j}$ regions in $c_j$ of $C(c_1,\ c_2, \cdots, c_n)$ results a diagram of a trivial knot. \hfill $\square$
\end{proof}

\noindent Suppose the $2$-bridge knot with Conway's notation $C(c_1,\ c_2, \cdots, c_n)$ satisfies
\begin{enumerate}
\item $c_{2k+1} \text{ is } even$ for each non-negative integer $k$ such that $2k+1 \leq n$ or
\item $c_{2k} \text{ is } even$ for each positive integer $k$ such that $2k \leq n$ and $n$ is even
\end{enumerate}
then we can provide a better upper bound for region unknotting number. Note that a $2$-bridge link $C(c_1,\ c_2, \cdots, c_n)$, where $c_{2k+1} = even$ for each non-negative integer $k$ such that $2k+1 \leq n$ and $n$ is odd, is not proper iff $\sum\limits_{i=2k+1} c_i \equiv 2\ (mod\ 4)$.

\begin{theorem}\label{5.thm25}
For a 2-bridge knot $C(c_1,\ c_2, \cdots, c_n)$
\begin{enumerate}
\item if $c_{2k+1} \text{ is } even$ for each non-negative integer $k$ such that $2k+1 \leq n$ then
\[\displaystyle{ u_R(K)\leq \left\lfloor\frac{\sum\limits_{i=2k+1} c_i\ + 2}{4}\right\rfloor},\]
\item if $c_{2k} \text{ is } even$ for each positive integer $k$ such that $2k \leq n$ and $n$ is even then
\[\displaystyle{ u_R(K)\leq min\left\{\left\lfloor\frac{\sum\limits_{i=2k} c_i\ + 2}{4}\right\rfloor, \left\lfloor\frac{\sum\limits_{i\not\in L} c_i\ +2}{4}\right\rfloor\right\}}.\]
\end{enumerate}
\end{theorem}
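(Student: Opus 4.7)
The plan is to extend the region-selection strategy of Theorem \ref{5.thm24} by exploiting the extra parity hypothesis to group all same-orientation integer tangles into a single $(2,q)$-torus tangle. The key observation is that because of the identification $R_{i,\,c_i+1} = R_{i+2,\,1}$ for same-orientation tangles (already used implicitly in Theorem \ref{5.thm24}), the horizontal (respectively vertical) integer tangles of $C(c_1,\ldots,c_n)$ concatenate into one linear strip whose regions are arranged exactly as in a single integer tangle with $\sum c$ crossings.

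For Part (1), assume every $c_{2k+1}$ is even. Then the horizontal strip $t_{c_1} \cup t_{c_3} \cup \cdots$ has $q = \sum_{2k+1 \leq n} c_{2k+1}$ crossings, with $q$ even. First I would show that the optimal $(2,q)$-torus unknotting pattern, which by \cite{regionunko} requires $\lfloor (q+2)/4 \rfloor$ region crossing changes placed on every fourth region of a linear $(2,q)$-tangle, can be realized using only regions inside the horizontal strip. Because each $c_{2k+1}$ is even, the accumulated offset $k_i \pmod 4$ advances by an even amount at each tangle boundary, so the ``every fourth'' pattern passes cleanly across the boundary regions $R_{i,\,c_i+1}$ without requiring one to sit at a shared boundary that also touches a vertical tangle. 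Second, I would verify that after these $\lfloor (q+2)/4 \rfloor$ region crossing changes the entire horizontal strip is equivalent as a $2$-tangle to the $0$-tangle, so the resulting diagram represents $C(0,c_2,0,c_4,\ldots)$; standard Conway reduction then collapses this to a diagram of the trivial knot. This yields $u_R(K) \leq \lfloor (\sum_{2k+1 \leq n} c_{2k+1} + 2)/4 \rfloor$.

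For Part (2), the hypothesis that every $c_{2k}$ is even and that $n$ is even produces a vertical strip $t_{c_2}' \cup t_{c_4}' \cup \cdots \cup t_{c_n}'$ of total length $\sum_{2k \leq n} c_{2k}$, composed of even-length tangles. The argument of Part (1) then applies after interchanging the roles of horizontal and vertical tangles, giving the bound $\lfloor (\sum_{2k \leq n} c_{2k} + 2)/4 \rfloor$. The second bound is immediate from Theorem \ref{5.thm24}, and taking the minimum completes the proof.

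The main obstacle will be the first step of Part (1): carefully verifying that the $(2,q)$-torus region-selection pattern can be realized inside the horizontal strip without using the shared boundary regions $R_{i,\,c_i+1} = R_{i+2,\,1}$, which also border the intervening vertical tangle $t_{c_{i+1}}'$ and would otherwise induce unwanted sign changes on its crossings. The even parity of each $c_{2k+1}$ should make such boundary regions avoidable; if a boundary use is unavoidable in some case, one must pair two such uses around the same vertical tangle so that their side effects on the vertical crossings cancel. Once the region selection is pinned down, the final recognition that the resulting diagram represents the unknot is routine via the Conway reduction $C(0,c_2,0,c_4,\ldots) \sim $ unknot.
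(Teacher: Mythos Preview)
Your strategy coincides with the paper's: concatenate the odd-indexed (resp.\ even-indexed) integer tangles into a single $(2,q)$-strip with $q=\sum c_{2k+1}$ and apply the every-fourth-region pattern $R_3,R_7,R_{11},\ldots$ to it, then observe that the resulting diagram (your $C(0,c_2,0,c_4,\ldots)$) is trivial. For Part~(2) you run the symmetric vertical version and take the minimum with Theorem~\ref{5.thm24}, exactly as the paper does.

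The one point worth flagging is your anticipated ``main obstacle.'' You assume that the shared boundary region $R_{i,c_i+1}=R_{i+2,1}$ also borders crossings of the intervening vertical tangle $t'_{c_{i+1}}$, and you plan to avoid such regions (or to pair them so their vertical side-effects cancel). In the standard $2$-bridge diagram used in the paper this concern does not arise: the horizontal strip of regions $R_j$ and the vertical tangles lie on opposite sides of the separating strand, so that shared region meets exactly two crossings --- the last crossing of $t_{c_i}$ and the first crossing of $t_{c_{i+2}}$ --- and none from $t'_{c_{i+1}}$. The paper states this explicitly in its proof (``region crossing change at $R_j$ \ldots\ results in $2$ crossing changes in horizontal tangles, one in some $t_{c_{2k-1}}$ and other in $t_{c_{2k+1}}$'') and therefore uses the boundary regions freely whenever the every-fourth pattern lands on one. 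Your contingency plan is harmless but unnecessary; moreover, for small tangles such as $c_1=2$ the pattern \emph{must} land on the boundary region $R_3$, so the avoidance you propose is not always possible. Once you drop this concern your argument is identical to the paper's.
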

\begin{proof}

\noindent \textbf{\underline{Case (i)}} When $c_{2k+1} \text{ is } even$ for each non-negative integer $k$ such that $2k+1 \leq n$.\\

\noindent Consider the $2$-bridge knot $C(c_1,\ c_2, \cdots, c_n)$, where $c_{2k+1} \text{ is } even$. Region crossing changes at $\langle R_3, R_7, \cdots,$ $R_{3+4\cdot \left\lfloor\frac{{\sum\limits_{i=2k+1}c_i}\ -2}{4}\right\rfloor}\rangle$ regions in $C(c_1,\ c_2, \cdots, c_n)$, as in Figure \ref{regiondata2-bridge}, results in a diagram of trivial knot. Observe that the region crossing change at $R_j$ for any  $j \in \{c_1+1,\ c_1+c_2+1,\cdots,\sum\limits_{i=1}^{\lceil\frac{n-2}{2}\rceil}c_{2i-1}+1\}$, results in $2$ crossing changes in horizontal tangles, one in some $t_{c_{2k-1}}$ and other in $t_{c_{2k+1}}$. Region crossing change at other region $R_i$ will result in $2$ crossing changes in some $t_{c_{2k-1}}$. After making above said region crossing changes, the absolute value of sum of signs of all the crossings of horizontal tangles in the resultant diagram becomes zero. Hence \[\displaystyle{ u_R(K)\leq \left\lfloor\frac{\sum\limits_{i=2k+1} c_i\ + 2}{4}\right\rfloor}.\]

\noindent \textbf{\underline{Case (ii)}} Proof follows similarly as in Case (i). Here we need to change $\langle R_3', R_7', \cdots, R_{3 + 4\cdot \left\lfloor\frac{\sum\limits_{i=2k}c_i\ \ -2}{4}\right\rfloor}'\rangle$ regions to get a diagram of trivial knot. \hfill $\square$
\end{proof}

\begin{example}
Consider the $2$-bridge knot $C(2,\ 3,\ 4,\ 2,\ 6)$. Observe that region crossing changes at $\langle R_3, R_7, R_{11}\rangle$ regions transform $C(2,\ 3,\ 4,\ 2,\ 6)$ to a diagram of trivial knot as shown in Figure \ref{23426}. Figure \ref{23426}(a) is showing $2$-bridge knot $C(2,\ 3,\ 4,\ 2,\ 6)$ and the diagram of trivial knot obtained by above said region crossing changes is shown in Figure \ref{23426}(b).

\begin{figure}
\begin{center}
\begin{subfigure}[b]{0.4\textwidth}
                \includegraphics[width=\textwidth]{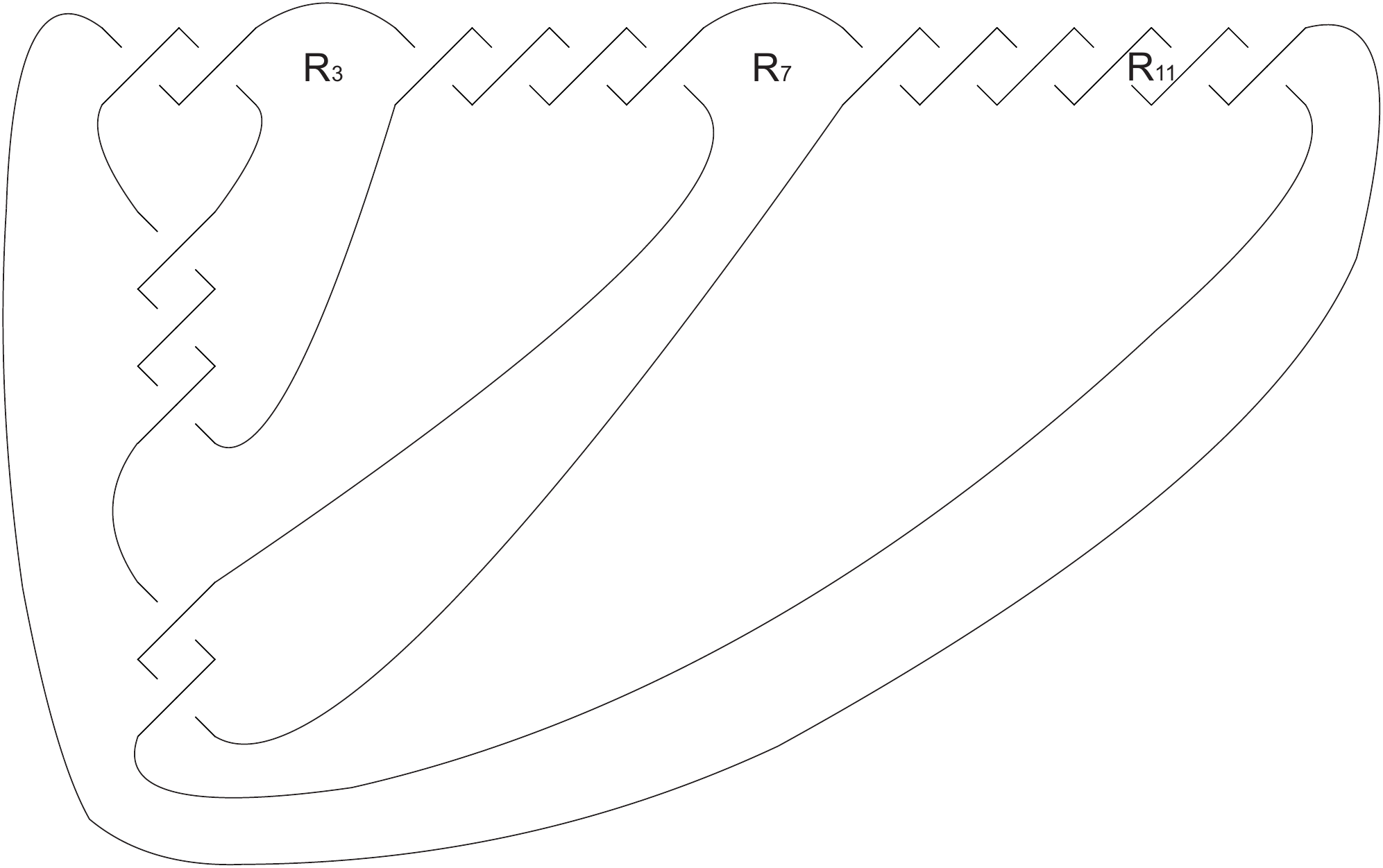}
                \caption{}
                \label{23426ex}
        \end{subfigure}%
        ~ 
        ~ 
        \begin{subfigure}[b]{0.4\textwidth}
                \includegraphics[width=\textwidth]{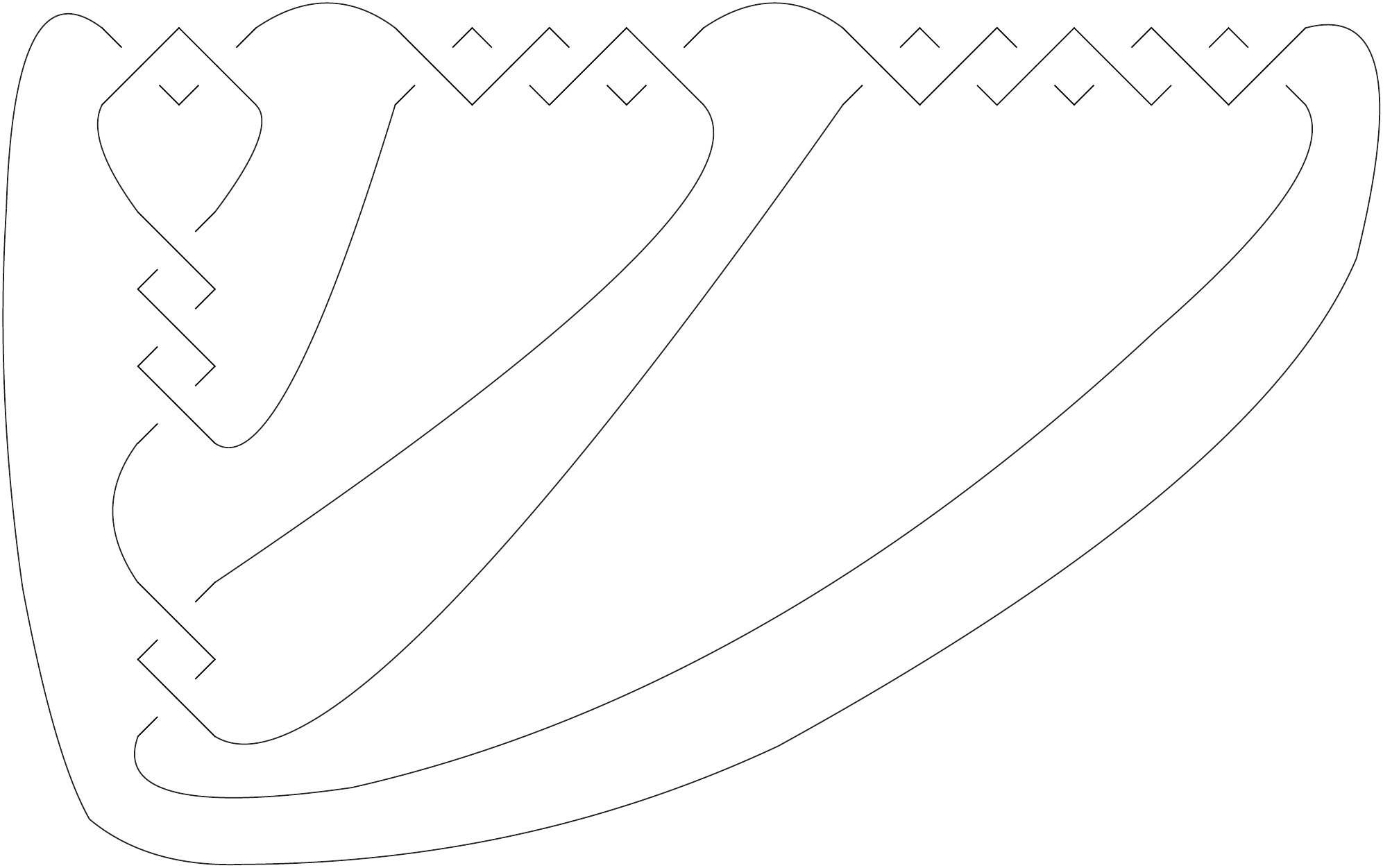}
                \caption{}
                \label{23426ex1}
        \end{subfigure}
\vspace*{8pt}
\caption{$2$-bridge link $C(2,\ 3,\ 4,\ 2,\ 6)$}\label{23426}
\end{center}
\end{figure}
\end{example}

\noindent Based on the above results, region unknotting number for some $2$-bridge knots is provided in Table~\ref{table1}.

\begin{table}[ht]
\begin{center}
\caption{Region unknotting number of some $2$-bridge knots}
\begin{tabular}{| c| c | c| c | l |}
\hline
Knot $K$ & $u_R(K)$ & Knot $K$ & $u_R(K)$\\
\hline
$C(2,\ c_2,\ 2,\ c_4)$   &  1 &     $C(c_1,\ 2,\ c_3,\ 2)$     &   1      \\
 $C(6,\ c_2,\ 2)$  &  $\leq 2$ &   $C(4,\ c_2,\ 2,\ c_4,\ 2)$        &    $\leq 2$     \\
  $C(4,\ c_2,\ 4)$  &  $\leq 2$ &  $C(2,\ c_2,\ 4,\ c_4,\ 2)$     &    $\leq 2$     \\
  $C(2,\ c_2,\ 6)$  &  $\leq 2$ &  $C(2,\ c_2,\ 2,\ c_4,\ 4)$       &   $\leq 2$      \\
   $C(4,\ c_2,\ 2)$  &  $\leq 2$ &   $C(2,\ c_2,\ 2,\ c_4,\ 2)$      &  $\leq 2$       \\
    $C(2,\ c_2,\ 4)$  &  $\leq 2$ &    $C(4,\ c_2,\ 2,\ c_4,\ 2,\ c_6)$      &  $\leq 2$       \\
 $C(6,\ c_2,\ 2,\ c_4)$ &  $\leq 2$ &    $C(2,\ c_2,\ 4,\ c_4,\ 2,\ c_6)$      &   $\leq 2$      \\
 $C(4,\ c_2,\ 4,\ c_4)$    &  $\leq 2$ &    $C(2,\ c_2,\ 2,\ c_4,\ 4,\ c_6)$     &  $\leq 2$       \\
 $C(2,\ c_2,\ 6,\ c_4)$  &  $\leq 2$ &  $C(2,\ c_2,\ 2,\ c_4,\ 2,\ c_6)$      &   $\leq 2$      \\
 $C(4,\ c_2,\ 2,\ c_4)$  & $\leq 2$ & $C(2,\ c_2,\ 2,\ c_4,\ 2,\ c_6,\ 2)$       &  $\leq 2$       \\
 $C(2,\ c_2,\ 4,\ c_4)$   &  $\leq 2$ &   $C(2,\ c_2,\ 2,\ c_4,\ 2,\ c_6,\ 2,\ c_8)$      &  $\leq 2$       \\
\hline
\end{tabular} \label{table1}
\end{center}
\end{table}

\section{Arf Invariant of $2$-bridge knots}\label{arfinvariant}
\noindent In this section we discuss the Arf invariant of $2$-bridge knots. Arf invariant is well defined for knots and proper links in different ways \cite{properk, arf, onknots}. A relation between Arf invariant and region crossing change is given by Z. Cheng \cite{proper}. Let $L$ be a reduced diagram of a proper link and $L'$ is obtained from $L$ by a region crossing change at a region $R$ in $L$ then $L'$ is also proper and there Arf invariants are related by the following theorem:

\begin{theorem} \cite{proper}
Let $L$ be a diagram of a proper link, $L'$ is obtained by taking region crossing change on region $R$ of $L$, where $R$ is white colored region in checkerboard coloring of $L$, then
\begin{center}
\emph{Arf}$(L)+$\emph{Arf}$(L')=$
$\begin{cases}
0$ $ ($\emph{mod 2}$)& \text{\emph{if} $\frac{1}{2}\sum\limits_{i=1}^m(a(c_i)-w(c_i))\equiv 0$ $($\emph{mod}$ $ $4 )$;}\\
1$ $ ($\emph{mod 2}$)& \text{\emph{if} $\frac{1}{2}\sum\limits_{i=1}^m(a(c_i)-w(c_i))\equiv 2$ $($\emph{mod}$ $ $4 )$.}
\end{cases}$
\end{center}
Here $\{c_1,\ c_2, \cdots, c_m\}$ denote the crossing points on the boundary of $R$ and $a(c_i)$ and $w(c_i)$ are defined as in Figure \ref{aciwci}. \hfill $\square$
\end{theorem}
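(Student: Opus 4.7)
The plan rests on two ingredients: (i) the Conway-polynomial description of the Arf invariant, and (ii) a careful analysis of the region crossing change as an iterated single crossing change relative to the checkerboard coloring.

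First, I would recall that for a proper link $L$, $\mathrm{Arf}(L)\equiv a_2(L)\pmod 2$, where $a_2(L)$ is the coefficient of $z^2$ in the Conway polynomial $\nabla_L(z)$. Applied to the Conway skein relation $\nabla_{L_+}-\nabla_{L_-}=z\,\nabla_{L_0}$, one gets $a_2(L_+)-a_2(L_-)=a_1(L_0)=\mathrm{lk}(L_0)$, so that a single crossing change at a crossing $c$ producing a diagram $L'$ satisfies
$$\mathrm{Arf}(L)+\mathrm{Arf}(L')\equiv \mathrm{lk}(L_0)\pmod 2,$$
where $L_0$ is the oriented smoothing at $c$.

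Next, I would realize the region crossing change at $R$ as the composition of the single crossing changes at $c_1,\ldots,c_m\in\partial R$, producing intermediate diagrams $L=L^{(0)},L^{(1)},\ldots,L^{(m)}=L'$. Since properness is preserved under region crossing change (by the main result of \cite{proper}), every intermediate step is well-defined and telescoping gives
$$\mathrm{Arf}(L)+\mathrm{Arf}(L')\equiv \sum_{i=1}^m \mathrm{lk}\bigl((L^{(i-1)})_0\text{ at }c_i\bigr)\pmod 2.$$

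The essential step is to reinterpret each summand in terms of the local data at $c_i$. From Figure \ref{aciwci}, $w(c_i)=\pm 1$ is the writhe of $c_i$, while $a(c_i)=\pm 1$ records, via the checkerboard coloring adjacent to the white region $R$, whether the over-strand of $c_i$ bounds $R$ on its consistent side. A short case check over the four local configurations of a crossing on $\partial R$ then shows that the parity of the local smoothing-linking contribution at $c_i$ is exactly $\tfrac12(a(c_i)-w(c_i))\in\{-1,0,1\}$, whence
$$\sum_{i=1}^m \mathrm{lk}\bigl((L^{(i-1)})_0\bigr)\equiv \tfrac12\sum_{i=1}^m \bigl(a(c_i)-w(c_i)\bigr)\pmod 2.$$

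The main obstacle is to verify that $a(c_j)$ and $w(c_j)$ for $j>i$ remain effectively unchanged, mod 2, under the earlier switches at $c_1,\ldots,c_{i-1}$, so that only the total sum $\tfrac12\sum(a(c_i)-w(c_i))$ appears and it is independent of the order in which the individual crossings on $\partial R$ are switched. This is a symmetry statement about the Goeritz-type incidence data of the white region $R$, and I would prove it by tracking how the checkerboard orientation of the strands passing through $R$ is preserved under switches on $\partial R$. Finally, reducing the resulting sum mod $4$ separates the two cases: $\tfrac12\sum(a(c_i)-w(c_i))\equiv 0\pmod 4$ forces $\mathrm{Arf}(L)+\mathrm{Arf}(L')\equiv 0$, while $\equiv 2\pmod 4$ forces it to equal $1$, exactly as claimed.
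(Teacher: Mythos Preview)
The paper does not prove this theorem at all: it is quoted from Cheng \cite{proper} and closed with a bare $\square$, with no argument given. There is therefore nothing in the present paper to compare your attempt against.

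That said, your sketch has a genuine gap worth flagging. You decompose the region crossing change into single crossing switches $L=L^{(0)}\to\cdots\to L^{(m)}=L'$ and appeal to properness at each intermediate stage, but a \emph{single} crossing change can alter a pairwise linking number by $\pm 1$, so the intermediate $L^{(i)}$ need not be proper and $\mathrm{Arf}(L^{(i)})$ need not be defined. This can be repaired by working with $a_2$ throughout (which exists for every link) and only identifying $a_2\equiv\mathrm{Arf}\pmod 2$ at the two endpoints $L,L'$, but you should make that explicit. A second, more serious issue is the claim that the parity of $\mathrm{lk}\bigl((L^{(i-1)})_0\bigr)$ is $\tfrac12(a(c_i)-w(c_i))$: this linking number is a global quantity of the smoothed diagram, not a local datum at $c_i$, so a ``short case check over the four local configurations'' cannot by itself yield it. One needs a global bookkeeping device (in Cheng's paper this is the Gordon--Litherland/Goeritz form attached to the checkerboard surface) to convert the sum of smoothing linking numbers into the purely local sum $\tfrac12\sum_i(a(c_i)-w(c_i))$; your outline gestures at this with the phrase ``Goeritz-type incidence data'' but does not actually carry it out.
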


\begin{figure}
\begin{center}
\includegraphics[width=11cm,height=2cm]{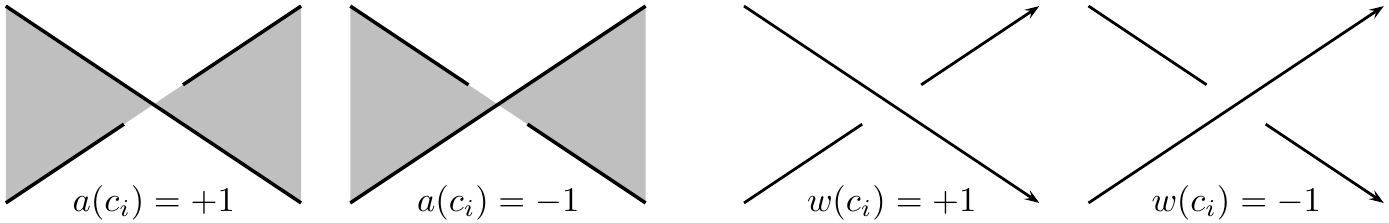}
\vspace*{8pt}
\caption{$a(c_i)$ and $w(c_i)$}\label{aciwci}
\end{center}
\end{figure}

\noindent Also we can use this to calculate Arf invariant of a knot or proper link. Assuming unbounded region as white colored, if we denote

\begin{align*}
 A(R) =
  \begin{cases}
  \frac{1}{2}\sum\limits_{i=1}^m(a(c_i)-w(c_i))        & \text{if  $R$ is white colored };\\
   -\frac{1}{2}\sum\limits_{i=1}^m(a(c_i)+w(c_i))        & \text{if  $R$ is black colored}
  \end{cases}
 \end{align*}
then Z. Cheng proved the following in \cite{proper}:

\begin{theorem}\cite{proper}\label{5.thm.3.2}
Let $L$ be a reduced diagram of a proper link, $R_1, \cdots, R_n$ some regions of $L$, such that region crossing changes at $R_1, \cdots, R_n$ will turn $L$ to be trivial. Then,
\begin{center}
\emph{Arf}$(L)=$
$\begin{cases}
0& \text{\emph{if} $\sum\limits_{i=1}^nA(R_i)\equiv 0$ $($\emph{{mod} 4}$ )$;}\\
1& \text{\emph{if} $\sum\limits_{i=1}^nA(R_i)\equiv 2$ $($\emph{{mod} 4}$ )$.}
\end{cases}$
\end{center} \hfill $\square$
\end{theorem}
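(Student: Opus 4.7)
The strategy is to combine the preceding single-step theorem with a telescoping argument along the sequence $R_1, \ldots, R_n$. Set $L_0 = L$ and let $L_j$ be obtained from $L_{j-1}$ by the region crossing change at $R_j$, so that $L_n$ is the trivial link and $\mathrm{Arf}(L_n)=0$. Applying the preceding theorem at step $j$ gives
\[\mathrm{Arf}(L_{j-1}) + \mathrm{Arf}(L_j) \equiv \tfrac{1}{2}\, A_{L_{j-1}}(R_j) \pmod 2,\]
where $A_{L_{j-1}}(R_j)$ denotes the quantity computed in the intermediate diagram $L_{j-1}$. Summing over $j=1,\ldots,n$, the interior Arf terms appear twice and cancel modulo $2$, leaving
\[\mathrm{Arf}(L) \equiv \tfrac{1}{2}\sum_{j=1}^n A_{L_{j-1}}(R_j) \pmod 2.\]

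A preliminary step is to extend the preceding theorem, which is stated only for a white region, to black regions. For a black region $R$ I would swap the checkerboard coloring of $S^2$, which turns $R$ white without altering the underlying link. Inspecting Figure \ref{aciwci}, the labels $a(c_i)$ and $w(c_i)$ exchange roles under a color swap in such a way that $\tfrac{1}{2}\sum(a(c_i)-w(c_i))$ is replaced by $-\tfrac{1}{2}\sum(a(c_i)+w(c_i))$, which is precisely the definition of $A(R)$ in the black case. With this extension every $R_j$ in the sequence is treated uniformly by the same formula.

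The main obstacle is reconciling the intermediate values $A_{L_{j-1}}(R_j)$ with the values $A_L(R_j)$ appearing in the statement; one needs $\sum_j A_{L_{j-1}}(R_j) \equiv \sum_j A_L(R_j) \pmod 4$. The discrepancy arises only from crossings $c\in\partial R_i\cap\partial R_j$ with $i<j$, whose signs are flipped by the earlier change at $R_i$. A direct case analysis on the four configurations in Figure \ref{aciwci} shows that such a sign flip alters $A(R_j)$ by an amount which is equal and opposite modulo $4$ to its effect on $A(R_i)$, so the contributions from every shared crossing cancel pairwise in the total sum. Properness of $L$ then guarantees that $\sum_j A_L(R_j)$ is always congruent to $0$ or $2$ modulo $4$, making the dichotomy in the statement exhaustive and completing the argument.
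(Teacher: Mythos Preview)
The paper does not prove this theorem; it is quoted verbatim from Cheng \cite{proper} and closed with a $\square$, so there is no in-paper argument to compare your attempt against.

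Your telescoping reduction to the single-region theorem, and the extension to black regions by swapping the checkerboard colouring, are both correct and natural. The substantive step is the last paragraph, where you need $\sum_j A_{L_{j-1}}(R_j)\equiv\sum_j A_L(R_j)\pmod 4$, and here there is a genuine gap. You claim that flipping a shared crossing $c\in\partial R_i\cap\partial R_j$ (with $i<j$) alters $A(R_j)$ by an amount ``equal and opposite modulo $4$ to its effect on $A(R_i)$'', so that the two cancel in the total. But in the telescoped sum the term for $R_i$ is $A_{L_{i-1}}(R_i)$, computed \emph{before} the region change at $R_i$ is ever performed; no effect of flipping $c$ on $A(R_i)$ enters that sum, so there is nothing for the change in $A(R_j)$ to cancel against. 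Moreover a single crossing may lie on as many as four of the $R_k$, of mixed colours, so the bookkeeping is not pairwise in the first place. Closing the gap requires a direct crossing-by-crossing verification that the accumulated discrepancy over all $k$ with $c\in\partial R_k$ is divisible by $4$ (or an appeal to the argument actually given in \cite{proper}); the pairwise cancellation you sketch does not accomplish this.
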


\begin{figure}
\begin{center}
\includegraphics[width=5cm,height=1cm]{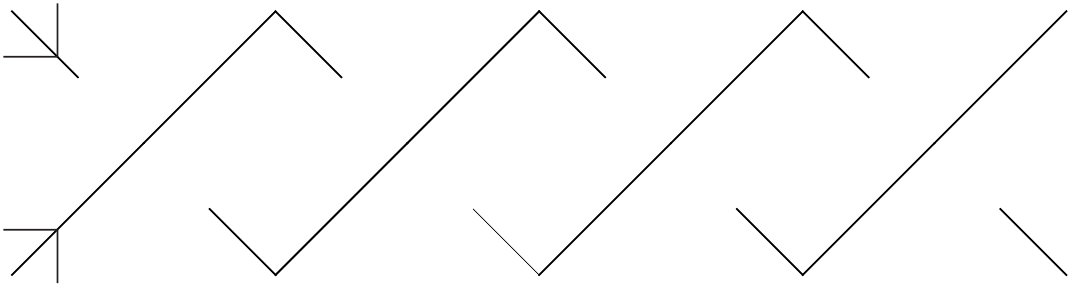} ~~~~~~~~~~~~
\includegraphics[width=5cm,height=1cm]{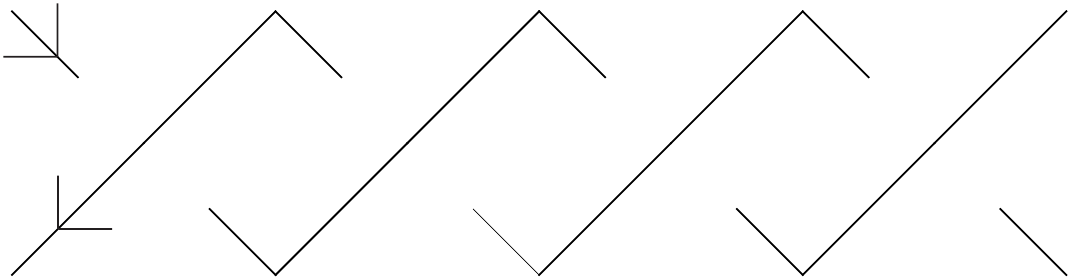}
\vspace*{8pt}
\caption{Parallel and opposite oriented tangles}\label{figparopp}
\end{center}
\end{figure}

\noindent We use Theorem \ref{5.thm.3.2} to calculate Arf invariant of different $2$-bridge knots. In a $2$-bridge knot $C(c_1,\ c_2, \cdots, c_n)$, we call a $2$-tangle a parallel oriented tangle or an opposite oriented tangle based on whether both the strands are parallel or opposite oriented as in Figure \ref{figparopp}. Observe that for any crossing $c$ in horizontal tangle $t_{c_i}$, $w(c) = -1$ or $1$ based on whether $t_{c_i}$ is parallel or opposite oriented. Similarly for any crossing $c$ in vertical tangle $t_{c_i}'$, $w(c) = 1$ or $-1$ based on whether  $t_{c_i}'$ is parallel or opposite oriented. The values of $w(c)$ are shown in Table \ref{table2}. Also $a(c)$ for any crossing $c$ in $2$-bridge knot is $-1$.\\

\begin{table}[ht]
\begin{center}
\caption{Value of $w(c)$ in different conditions}
\begin{tabular}{| c| c | c| c | l |}
\hline
 & parallel oriented & opposite oriented \\
\hline
horizontal tangle ($t_{c_i}$)   &  -1 &   1       \\
vertical tangle ($t_{c_i}'$)    &   1 &  -1      \\
\hline
\end{tabular} \label{table2}
\end{center}
\end{table}

\noindent Note that Arf invariant of a link depends on orientation. Let $L$ be an oriented link and link $L'$ is obtained from $L$ after changing the orientation of one of its component. Observe that Arf invariants of $L$ and $L'$ may differ since smoothing of crossings according to orientation in $L$ and $L'$ may result with knots having different Arf invariants. Dependance of Arf invariant on the orientation of links is shown in Example \ref{5.ex3}. However, Arf invariant of a knot remains same on reversing the orientation of knot. We calculate Arf invariant for some $2$-bridge knot (not link) classes assuming unbounded region as white colored as in Figure \ref{regiondata2-bridge}. Observe that $C(m,\ n)$ is a link diagram if both $m$ and $n$ are odd. In the following theorem, we calculate Arf invariant for $2$-bridge knot (not link) whose Conway's notation is $C(m,\ n)$.

\begin{theorem}
Consider a $2$-bridge knot with Conway's notation $C(m,\ n)$
\begin{enumerate}
\item when only one of either $m$ or $n$ is even. Specifically, if $n$ is even and
\begin{enumerate}
\item  $n\equiv \ 0\ (mod\ 4)$, then
\begin{center}
\emph{Arf}$(C(m,\ n))=$
$\begin{cases}
0& \text{\emph{if} $\frac{n}{2} \equiv 0$ $($\emph{{mod} 4}$ )$;}\\
1& \text{\emph{if} $\frac{n}{2}\equiv 2$ $($\emph{{mod} 4}$ )$.}
\end{cases}$
\end{center}
\item  $n\equiv \ 2\ (mod\ 4)$, then
\begin{center}
\emph{Arf}$(C(m,\ n))=$
$\begin{cases}
0& \text{\emph{if} $m+\frac{n}{2} \equiv 0$ $($\emph{{mod} 4}$ )$;}\\
1& \text{\emph{if} $m+\frac{n}{2}\equiv 2$ $($\emph{{mod} 4}$ )$.}
\end{cases}$
\end{center}
\end{enumerate}

\item when both $m$ and $n$ are even and if
\begin{enumerate}
\item  $n\equiv \ 0\ (mod\ 4)$, then
\begin{center}
\emph{Arf}$(C(m,\ n))= 0$
\end{center}
\item  $n\equiv \ 2\ (mod\ 4)$, then
\begin{center}
\emph{Arf}$(C(m,\ n))=$
$\begin{cases}
0& \text{\emph{if} $m \equiv 0$ $($\emph{{mod} 4}$ )$;}\\
1& \text{\emph{if} $m \equiv 2$ $($\emph{{mod} 4}$ )$.}
\end{cases}$
\end{center}
\end{enumerate}
\end{enumerate}
\end{theorem}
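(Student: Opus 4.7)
The plan is to apply Theorem~\ref{5.thm.3.2}, which computes $\mathrm{Arf}(L)$ from $\sum A(R_i)\pmod 4$ over any set of regions whose region crossing changes unknot $L$. The explicit unknotting sets are already supplied in the proof of Theorem~\ref{5.thm23}: in cases 1(a) and 2(a) (i.e.\ $n\equiv 0\pmod 4$) the set consists of $n/4$ non-consecutive bigons among $R'_2,\ldots,R'_n$; in cases 1(b) and 2(b) (i.e.\ $n\equiv 2\pmod 4$) it consists of $(n-2)/4$ such bigons together with the region $R'_1$ that bounds $m+1$ crossings. Reusing these same sets, the problem reduces to computing $A(R)$ at each chosen region and summing.

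The first technical step is to determine the orientation type (parallel or opposite, in the sense of Figure~\ref{figparopp}) of each of $t_m$ and $t'_n$. Tracing the orientation from one overbridge of $C(m,n)$, the type is forced by the parities of $m$ and $n$; this tabulation, combined with Table~\ref{table2}, fixes $w(c)$ at every crossing. Since $a(c)=-1$ for every crossing in a $2$-bridge diagram, the quantity $A(R'_j)$ for a bigon $R'_j\subset t'_n$ equals either $-1-\epsilon$ or $1-\epsilon$ (depending on whether $R'_j$ is white or black in the checkerboard coloring with unbounded region white), where $\epsilon\in\{\pm 1\}$ is the common $w$-value of the two vertical crossings on $\partial R'_j$. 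Choosing the bigons four apart (e.g.\ $R'_3,R'_7,\ldots$) makes all of them share the same checkerboard color, so their contributions collapse to $\frac{n}{4}\,A(R'_j)$ in the $n\equiv 0\pmod 4$ subcases and to $\frac{n-2}{4}\,A(R'_j)+A(R'_1)$ in the $n\equiv 2\pmod 4$ subcases.

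Reducing each of these sums modulo $4$ and comparing with the residues $0$ versus $2$ given in Theorem~\ref{5.thm.3.2} is then a direct check that should yield the four claimed formulas. The main obstacle I anticipate is the term $A(R'_1)$ in cases 1(b) and 2(b): its boundary consists of all $m$ crossings of $t_m$ (of a single common $w$-sign once the orientation type of $t_m$ is fixed) together with one crossing of $t'_n$, making $A(R'_1)$ a linear function of $m$ modulo $4$. The careful sign bookkeeping here is precisely what separates the four subcases and produces the $m\pmod 4$ and $\bigl(m+\tfrac{n}{2}\bigr)\pmod 4$ criteria stated in the theorem, so I would handle each parity combination in turn and verify the residue match directly.
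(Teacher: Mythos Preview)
Your approach is correct and essentially identical to the paper's: both invoke Theorem~\ref{5.thm.3.2} on the explicit unknotting regions supplied by Theorem~\ref{5.thm23}, determine the orientation types of $t_m$ and $t'_n$ to fix $w(c)$ (with $a(c)=-1$ throughout), and then reduce $\sum A(R_i)$ modulo~$4$. Your extra caution about spacing the bigons ``four apart'' to force a common checkerboard color is harmless but unnecessary, since all the bigons $R'_2,\ldots,R'_n$ inside a single twist already share the same color; the paper simply asserts that the chosen regions are white and proceeds directly to $\sum A(R_i)=-n/2$, $-(m+n/2)$, $0$, or $-m$ in the four respective subcases.
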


\begin{proof}
\noindent \textbf{\underline{Case (i)}} Observe that in $C(m,\ n)$, the horizontal tangle $t_m$ is opposite oriented and the vertical tangle $t_n'$ is parallel oriented. Hence at each crossing $c$, we have $w(c) = 1$ and $a(c) = -1$. Since the positions of regions $R_i$ which turn $C(m,\ n)$ to unknot, is provided in Theorem \ref{5.thm23}, it is easy to calculate $A(R_i)$ for each of these regions $R_i$. In particular for any $R_i$, \[A(R_i) = -(\text{number of crossings on}\ \partial R_i).\]
Hence
\begin{center}
$\sum A(R_i) =$
$\begin{cases}
-\frac{n}{2}& \text{\emph{if} $n \equiv 0$ $($\emph{{mod} 4}$ )$;}\\
-(m+\frac{n}{2})& \text{\emph{if} $n \equiv 2$ $($\emph{{mod} 4}$ )$.}
\end{cases}$
\end{center}
Since $\sum\limits_{i}A(R_i)$ is always even, by Theorem \ref{5.thm.3.2}, the result follows.\\

\noindent \textbf{\underline{Case (ii)}}  When both $m$ and $n$ are even then both $t_m$ and $t_n'$ tangles are opposite oriented. Hence for each crossing $c$ in $t_m$, $w(c) = 1$ and for each crossing $c$ in $t_n'$, $w(c) = -1$. Observe that all the regions $R_i$ provided by Theorem \ref{5.thm23}, whose change transform $C(m,\ n)$ into trivial knot, are white colored. Hence,
\begin{center}
$\sum A(R_i) =$
$\begin{cases}
0& \text{\emph{if} $n \equiv 0$ $($\emph{{mod} 4}$ )$;}\\
-m& \text{\emph{if} $n \equiv 2$ $($\emph{{mod} 4}$ )$.}
\end{cases}$
\end{center}
Now by Theorem \ref{5.thm.3.2}, the result follows.
\hfill $\square$
\end{proof}

\begin{example}\label{5.ex3}
Arf invariant of $2$-bridge proper link $C(m,\ n)$ shown in Figure \ref{mnlink}(a) and \ref{mnlink}(b) is not same. For Figure \ref{mnlink}(a)
\begin{center}
\emph{Arf}$(C(m,\ n))=$
$\begin{cases}
0& \text{\emph{if} $2\lfloor\frac{m+2}{4}\rfloor \equiv 0$ $($\emph{{mod} 4}$ )$;}\\
1& \text{\emph{if} $2\lfloor\frac{m+2}{4}\rfloor \equiv 2$ $($\emph{{mod} 4}$ )$;}
\end{cases}$
\end{center}
and for Figure \ref{mnlink}(b)
\begin{center}
\emph{Arf}$(C(m,\ n))=$
$\begin{cases}
0& \text{\emph{if} $2\lfloor\frac{n+2}{4}\rfloor \equiv 0$ $($\emph{{mod} 4}$ )$;}\\
1& \text{\emph{if} $2\lfloor\frac{n+2}{4}\rfloor \equiv 2$ $($\emph{{mod} 4}$ )$;}
\end{cases}$
\end{center}
respectively. Note that in case of Figure \ref{mnlink}(a), $a(c) = -1 = w(c)$ for each crossing $c$ and in case of Figure \ref{mnlink}(b), $a(c) = -1 = -w(c)$. Also $\sum\limits_{i=1}^kA(R_i) =
2\lfloor\frac{m+2}{4}\rfloor$ and $-2\lfloor\frac{n+2}{4}\rfloor$ for links in Figure \ref{mnlink}(a) and \ref{mnlink}(b), respectively. By Theorem \ref{5.thm23}, let region crossing change at $R_i \ (i = 1,2, \cdots, k)$  transforms $C(m,\ n)$ to a trivial link. For example the Arf invariant for $2$-bridge link $C(7,\ 5)$ is $0$ or $1$ when orientation is taken as in Figure \ref{mnlink}(a) or \ref{mnlink}(b) respectively.
\end{example}

\begin{figure}
\begin{center}
\begin{subfigure}[b]{0.2\textwidth}
                \includegraphics[width=\textwidth]{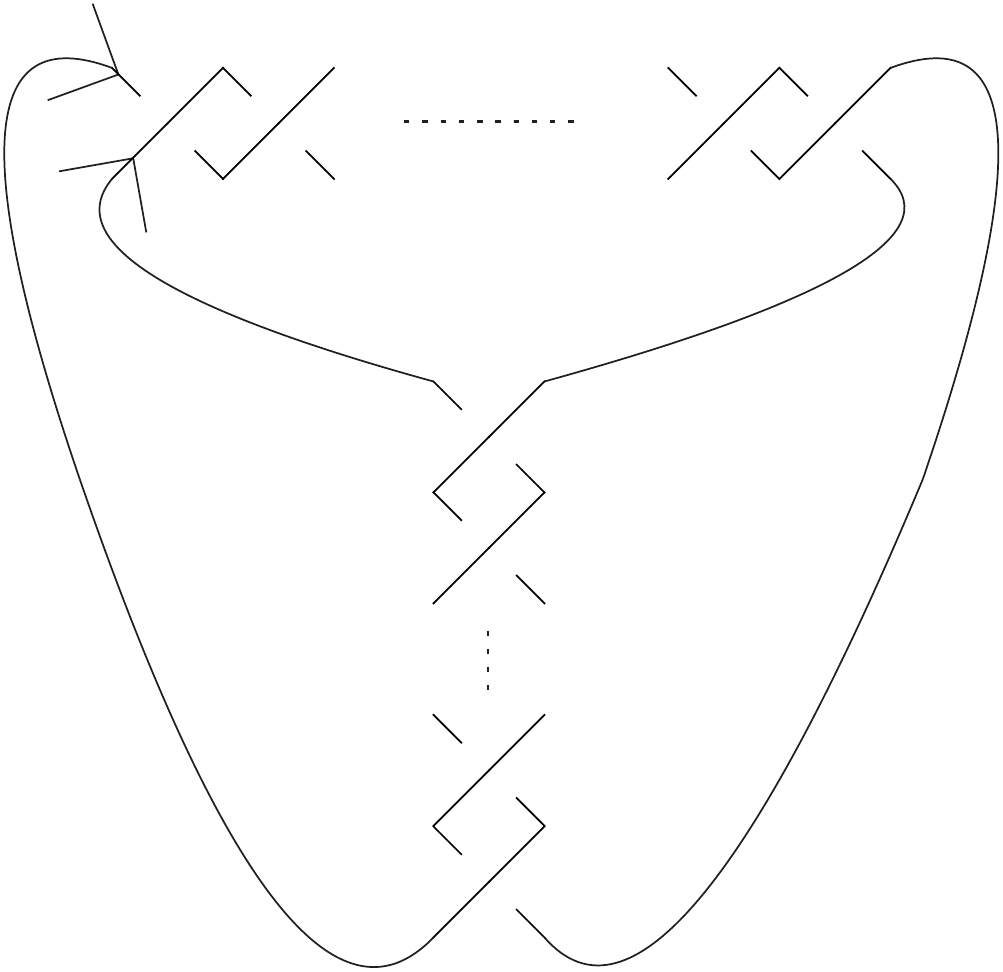}
                \caption{}
                \label{75ori}
        \end{subfigure}%
        ~ 
        ~ 
        \begin{subfigure}[b]{0.2\textwidth}
                \includegraphics[width=\textwidth]{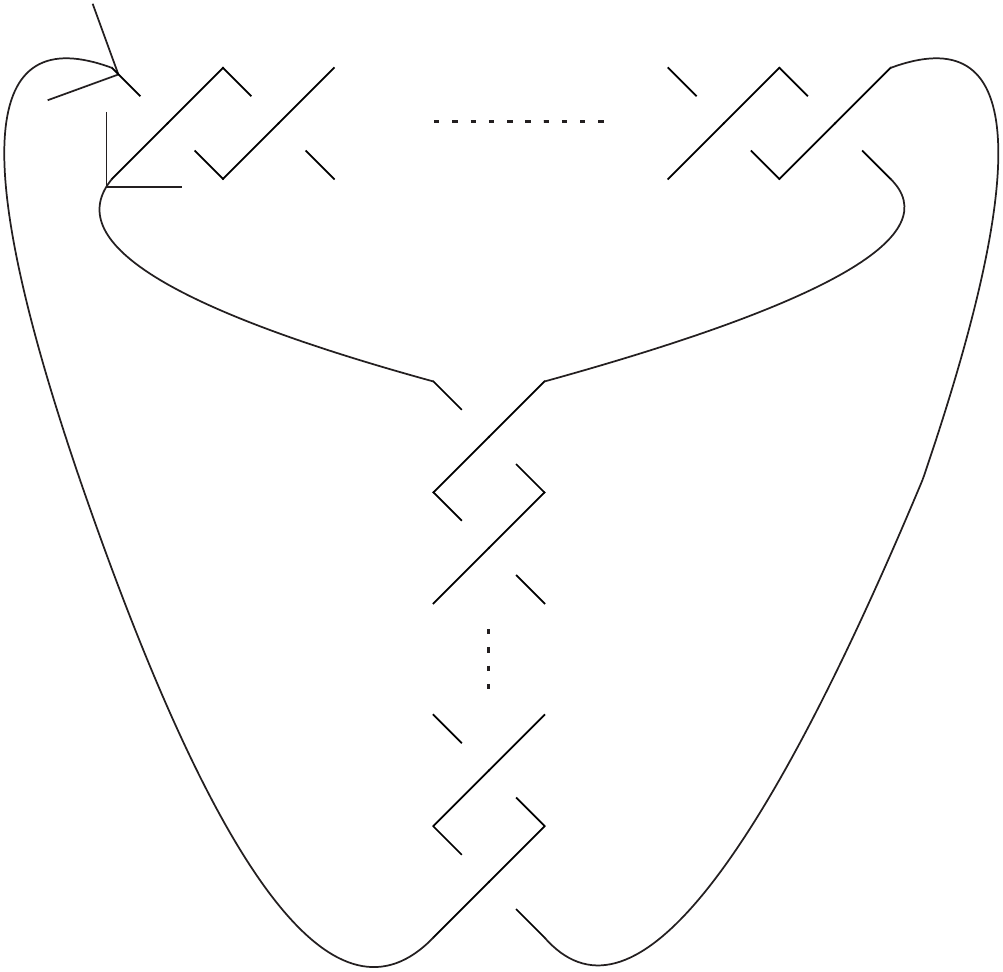}
                \caption{}
                \label{75ori2}
        \end{subfigure}
\vspace*{8pt}
\caption{$2$-bridge link $C(m,\ n)$ with different orientations}\label{mnlink}
\end{center}
\end{figure}

\noindent Note that $C(m,\ p,\ n)$ will be a knot (not link) if and only if either only one of $m$ or $n$ is even or all $m,\ p$ and $n$ are odd.

\begin{theorem}
Consider a $2$-bridge knot with Conway's notation $C(m,\ p,\ n)$
\begin{enumerate}
\item when only one of either $m$ or $n$ is even. Specifically, if $m$ is even
\begin{center}
\emph{Arf}$(C(m,\ p,\ n))=$
$\begin{cases}
0& \text{\emph{if} $\sum\limits_{i}A(R_i) \equiv 0$ $($\emph{{mod} 4}$ )$;}\\
1& \text{\emph{if} $\sum\limits_{i}A(R_i) \equiv 2$ $($\emph{{mod} 4}$ )$,}
\end{cases}$
\end{center}
where if
\begin{enumerate}
\item $p$ is even
\begin{center}
$\sum\limits_{i}A(R_i)=$
$\begin{cases}
2\lfloor\frac{m+n+2}{4}\rfloor& \text{\emph{if} $m \equiv 0$ $($\emph{{mod} 4}$ )$;}\\
p+2\lfloor\frac{m+n+2}{4}\rfloor& \text{\emph{if} $m \equiv 2$ $($\emph{{mod} 4}$ )$.}
\end{cases}$
\end{center}
\item $p$ is odd
\begin{center}
$\sum\limits_{i}A(R_i)=$
$\begin{cases}
\lfloor\frac{m}{2}\rfloor& \text{\emph{if} $m \equiv 0$ $($\emph{{mod} 4}$ )$;}\\
p+\lfloor\frac{m}{2}\rfloor& \text{\emph{if} $m \equiv 2$ $($\emph{{mod} 4}$ )$.}
\end{cases}$
\end{center}
\end{enumerate}

\item Otherwise, when all $m,\ p$ and $n$ are odd and if
\begin{enumerate}
\item $p+n\equiv \ 0\ (mod\ 4)$, then
\begin{center}
\emph{Arf}$(C(m,\ p,\ n))=$
$\begin{cases}
0& \text{\emph{if} $2\lfloor\frac{p+2}{4}\rfloor \equiv 0$ $($\emph{{mod} 4}$ )$;}\\
1& \text{\emph{if} $2\lfloor\frac{p+2}{4}\rfloor \equiv 2$ $($\emph{{mod} 4}$ )$.}
\end{cases}$
\end{center}
\item $p+n\equiv \ 2\ (mod\ 4)$, then
\begin{center}
\emph{Arf}$(C(m,\ p,\ n))=$
$\begin{cases}
0& \text{\emph{if} $m+1+2\lfloor\frac{p}{4}\rfloor \equiv 0$ $($\emph{{mod} 4}$ )$;}\\
1& \text{\emph{if} $m+1+2\lfloor\frac{p}{4}\rfloor \equiv 2$ $($\emph{{mod} 4}$ )$.}
\end{cases}$
\end{center}
\end{enumerate}
\end{enumerate}
\end{theorem}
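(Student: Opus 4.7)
The plan is to apply Theorem \ref{5.thm.3.2} with the explicit families of unknotting regions constructed earlier in Section \ref{ubfrunf2bk}: Theorem \ref{5.thm28} handles case (i) (one of $m, n$ even), while Theorem \ref{5.thm29} (when $p$ is even) or Theorem \ref{5.thm31} (when $p$ is odd) handles case (ii) (all of $m, p, n$ odd). Since Cheng's theorem reduces the computation of the Arf invariant to evaluating $\sum_i A(R_i) \bmod 4$, all that remains is to compute this sum for the prescribed regions.

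First, I would determine the orientation (parallel versus opposite) of each integer tangle $t_m$, $t_p'$, $t_n$ in $C(m, p, n)$ by tracing the two strands through the tangle sum; the outcomes depend on the parities of $m, p, n$, and they immediately give the value of $w(c)$ at every crossing by Table \ref{table2}. Combined with $a(c) = -1$ for every crossing of a $2$-bridge knot and the white/black coloring convention (unbounded region white as in Figure \ref{regiondata2-bridge}), the formula for $A(R)$ becomes, region by region, a signed count of crossings on $\partial R$. For the regions appearing in case (i)(a) ($p$ even, $m$ even), the unknotting set from Theorem \ref{5.thm28} lies entirely among the horizontal tangle regions; each such region $R_i$ contributes $\pm 2$, yielding $\sum A(R_i) = 2\lfloor (m+n+2)/4 \rfloor$ when $m \equiv 0 \pmod 4$, with an additional $p$ contribution from the vertical tangle when $m \equiv 2 \pmod 4$. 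Case (i)(b) is analogous, but because $p$ is odd the counting shift gives $\lfloor m/2 \rfloor$ in place of $2\lfloor (m+n+2)/4 \rfloor$.

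The main obstacle is case (ii), where $m, p, n$ are all odd. Here the unknotting regions from Theorem \ref{5.thm29} or \ref{5.thm31} are spread across both a horizontal and the vertical tangle, with different orientations and therefore different $w(c)$-values, and one has to separate the two subcases $p+n \equiv 0 \pmod 4$ and $p+n \equiv 2 \pmod 4$ because the minimal unknotting set changes accordingly. After careful bookkeeping of which regions are white versus black and of how many crossings from each tangle bound each region, the two subcases reduce to the stated expressions $2\lfloor (p+2)/4 \rfloor$ and $m + 1 + 2\lfloor p/4 \rfloor$, respectively; reducing these modulo $4$ and invoking Theorem \ref{5.thm.3.2} yields the result in every case.
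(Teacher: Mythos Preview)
Your proposal is correct and follows essentially the same route as the paper: determine $a(c)$ and $w(c)$ at every crossing from the tangle orientations dictated by the parities of $m,p,n$, take the explicit unknotting regions from Theorems~\ref{5.thm28} and~\ref{5.thm31}, compute $\sum_i A(R_i)$, and invoke Theorem~\ref{5.thm.3.2}. One small slip: in case~(ii) all of $m,p,n$ are odd, so $p$ is odd and Theorem~\ref{5.thm29} is never relevant there---only Theorem~\ref{5.thm31} supplies the unknotting regions for that case, exactly as the paper does.
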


\begin{proof}
Note that when $m,\ p$ are even and $n$ is odd then $a(c) = -1 = w(c)$ for each crossing $c$ in $C(m,\ p,\ n)$. When $m$ is even and  $n,\ p$ are odd and if crossing $c$ is in $t_m$ or $t_p'$ then $a(c) = -1 = w(c)$, otherwise if crossing $c$ is in $t_n$ then $a(c) = -1 = -w(c)$. Also when all $m,\ p$ and $n$ are odd, $a(c) = -1 = -w(c)$ for all crossings $c$ in $C(m,\ p,\ n)$. Theorem \ref{5.thm28} and Theorem \ref{5.thm31} provide the positions of region crossing changes to transform $C(m,\ p,\ n)$ to a trivial knot. Further result is based on Theorem \ref{5.thm.3.2} and simple calculation of $\sum\limits_{i}A(R_i)$. \hfill $\square$
\end{proof}

\end{document}